\newcommand{\enorm}   [1] {\interleave #1 \interleave}
\newcommand{\norm}   [1] {\Vert#1\Vert}
\newcommand{\jump}   [1] {[\![#1]\!]}
\newcommand{\wph}{\bm{W}}
\newcommand{\uph}{U}
\newcommand{\pph}{P}
\newcommand{\qph}{Q}
\newcommand{\vph}{\bm \chi}
\newcommand{\dual}[2]{\langle#1,#2\rangle}
\newcommand{\yao}[1]{ {\color{red}{#1}} }
\theoremstyle{dgthm}
\newtheorem{theorem}{Theorem}
\newtheorem{proposition}{Proposition}
\newtheorem{lemma}{Lemma}
\theoremstyle{dgdef}
\newtheorem{remark}{Remark}
\title{Local discontinuous Galerkin method for a third order singularly perturbed problem of convection-diffusion type}
\author{
	Li Yan\footnotemark[1]
	\quad
	Zhoufeng Wang\footnotemark[2]
	\quad
	Yao Cheng\footnotemark[1]
}
\begin{document}
	
\maketitle

\footnotetext[1]
{School of Mathematical Sciences,
	Suzhou University of Science and Technology,
	Suzhou 215009, Jiangsu Province, China;
	Research supported by NSFC grant 11801396, Natural Science Foundation of Jiangsu Province grant BK20170374, Natural Science Foundation of the Jiangsu Higher Education Institutions of China grant 17KJB110016;
	\textbf{yanl@post.usts.edu.cn;ycheng@usts.edu.cn} 
}
\footnotetext[2]
{Henan University of Science and Technology, 
	School of Mathematics and Statistics,
	Henan, China; \textbf{ zfwang801003@126.com}
}

\begin{abstract}
	The local discontinuous Galerkin (LDG) method is
	studied for a third-order singularly perturbed problem
	of the convection-diffusion type.
	Based on a regularity assumption for the exact solution,
	we prove almost $O(N^{-(k+1/2)})$ (up to a logarithmic factor)
		energy-norm convergence uniformly in the perturbation parameter.
		Here, $k\geq 0$ is the maximum degree of piecewise polynomials
		used in discrete space, and $N$ is the number of mesh elements.
	The results are valid for the three types of layer-adapted  meshes:
	Shishkin-type, Bakhvalov-Shishkin type, and Bakhvalov-type.
	Numerical experiments are conducted to test the theoretical results.
\end{abstract}

\noindent\emph{Keywords:} 
Local discontinuous Galerkin method, Third-order singularly perturbed problem,
Convection-diffusion, Shishkin-type mesh, Bakhvalov-type mesh, Uniform convergence

\section{Introduction}
Singularly perturbed problems have arisen 
frequently in fluid mechanics, elasticity, chemical reactor theory,
	and many other related areas \cite{Roos2008}.
Second- and fourth-order singularly perturbed problems
have been widely studied.
Only a few results have been reported
for third-order singularly perturbed problems,
which might come from
the theory of dispersive systems and thin-film flows;
see the applications described in \cite{Howes1983,Howes1983:Stud}.
Consider the third-order singularly perturbed problem, 
\begin{equation}\label{SPP:3RD}
\begin{split}
\varepsilon u^{\prime \prime \prime}(x)
- (a(x)u^{\prime}(x))^{\prime}+ b(x)u^{\prime}(x) + c(x)u(x) =& f(x)
\hspace{1em}{\rm in}\hspace{0.5em}\Omega  = (0,1),\\
u(0) = u(1) = u^{\prime}(1) =& 0,
\end{split}
\end{equation}
where $0 < \varepsilon  \ll 1$ is the perturbation parameter,
and $a,b,c,f$ are smooth functions 
on $\overline{\Omega}$ that satisfy
\begin{equation}\label{assumption:coefficient}
a(x) \ge \alpha  > 0,\quad 
c(x) -\frac12b^{\prime}(x) \ge \gamma  > 0, \quad x\in \Omega
\end{equation}
for the positive constants $\alpha$ and $\gamma$.
The solution to problem \eqref{SPP:3RD} typically
has a weak boundary layer at $x=1$ (see \eqref{regularity}).


In \cite{Roos2015},
Roos et al. employed an upwind difference scheme
on a Shishkin mesh to solve an analogous third-order singularly perturbed problem.
They obtained an almost first-order uniform convergence.
In \cite{Valarmathi2002},
Valarmathi and Ramanujam
transformed the third-order equation into a weakly
coupled system of first- and second-order equations.
An exponentially fitted difference scheme
and a classical difference scheme were combined
to solve the following two equations
in fine and rough domains, respectively.
Similarly, in \cite{Temsah2008},
Temsah obtained some numerical results
to test the spectral collocation method
for the third-order singularly perturbed problem
of convection-diffusion type and reaction-diffusion type.
As for finite element discretization,
Zarin et al. studied
a $C^0$-continuous interior penalty method
and obtained uniform convergence of order $k-1$
in energy-norm
for three Shishkin-type meshes \cite{Zarin2016},
where $k>1$ is the highest degree of piecewise polynomials.

This study aims to develop
a well-known version of the discontinuous Galerkin (DG) method,
i.e. the local discontinuous Galerkin (LDG) method
for the problem \eqref{SPP:3RD}.
This method is regarded as a successful application
of the DG method to a convection–diffusion system \cite{Cockburn:Shu:LDG}.
The main idea is to rewrite the second-order equation
into an equivalent first-order system
and then apply the DG method to solve each differential equation.
The LDG method inherits the advantages of the DG method,
such as local conservativity,
flexibility with the mesh-design,
and the possibility for adaptive $hp$-strategy.
Thus, it is suited to problems where solutions have steep gradients
	or boundary layers. In the past 20 years, there have been extensive studies
	on LDG methods for solving various equations
	with higher order derivatives.


For the second order singularly perturbed problem,
	the LDG method produces good results, even on a uniform mesh;
	see \cite{CYWL22,Xie2009JCM}.
In \cite{CYWL22,Cheng:Zhang2017},
Cheng et al. demonstrated the double-optimal local error estimate of the LDG method
on quasi-uniform meshes.
In \cite{Zhu:NMPDE:2013,Zhu:2dMC},
Zhu and Zhang studied the uniform convergence of the LDG method
on the standard Shishkin mesh.
Recently, the LDG method was applied to solve a two-parameter singularly perturbed problem, and several error estimates were obtained
on six types of layer-adapted meshes in a uniform framework \cite{Cheng2021}.
Uniform convergence of the LDG method
with generalised alternating numerical fluxes
was also obtained
for a two-dimensional singularly perturbed convection-diffusion problem
\cite{Cheng2020}.

However, we are not aware of any results of the LDG method
for third-order singularly perturbed problems.
In this paper,
we study the LDG method
for a third order singularly perturbed problem \eqref{SPP:3RD}
of convection-diffusion type.
In this study, several aspects were addressed.
\begin{itemize}
	\item 
Although the LDG method has been studied for many high-order differential equations, no LDG scheme is available in the literature
for third-order singularly perturbed problem \eqref{SPP:3RD}.
To construct the LDG scheme,
one has to design the numerical fluxes carefully
which obey some known principles, such as upwinding
for the convective flux, alternating for the diffusive flux,
and alternating for the dispersive flux.
However, the way to coordinate them suitably,
including a suitable setting of these fluxes on the domain boundaries,
is unclear.
For the first time, in this study, we propose a stable and high-accuracy LDG scheme
for problem \eqref{SPP:3RD}.

\item
Uniform convergence is often performed 
for the numerical method on the piecewise
uniform Shishkin mesh (S-mesh).
However, some generalizations of layer-adapted meshes \cite{Linss10},
such as Bakhvalov-Shishkin mesh (BS-mesh) and Bakhvalov-type mesh
(B-mesh), are alternatives to eliminate the influence of the
logarithmic factor on the convergence rate.
In this study, we carry out error analysis on three typical
layer-adapted meshes, namely, an S-mesh,
a BS-mesh, and a B-mesh.
Based on the regularity assumption of the exact solution of \eqref{SPP:3RD}
and the approximation errors of the local Gauss-Radau projection
on the aforementioned layer-adapted meshes \cite{Cheng2021},
we prove an almost optimal energy-norm error estimate of the LDG method
on these meshes in a unified framework.
\item
In the theoretical analysis, 
we use the local Gauss-Radau projections to address the projection error. However, a difficulty arises from the estimate of the projection error about the second-order derivative.
Because we have no stability about this term in the energy-norm of the projection error,
we have to seek its control by the first-order derivative 
as well as the element interface jump
of projection error about the primal variable in the energy norm.
This relationship can be established from
the inherent structure of the LDG scheme.
Additionally, to obtain the uniform convergence of 
the LDG method on the B-mesh,
the special structure of this mesh must be studied.	
\end{itemize}
To the best of our knowledge, this is the first uniform-convergence analysis
of the LDG method for third-order singularly perturbed problems
of convection-diffusion type.
These results can also be extended
to the third-order problem of reaction-diffusion type,
even-order singularly perturbed problems,
and higher dimensional problems.
This will appear in future work.

The remainder of this paper is organised as follows.
In Section \ref{Sec:scheme},
we present layer-adapted meshes and describe the LDG method.
In Section \ref{Sec:error},
we carry out the error analysis.
	Approximation properties of the local Gauss-Radau projections
	on layer-adapted meshes are presented in Section \ref{Sec:projection}
	and the main result of the energy-norm error estimate
	is stated in Section \ref{Sec:main:result}.
In Section \ref{Sec:experiments}, 
we present some numerical experiments
to test our theoretical results.
Finally, in Section \ref{Sec:concluding}
we give our concluding remarks, and in the Appendix we prove a technical lemma.


\section{Layer-adapted meshes and the LDG method}
\label{Sec:scheme}

	Assume that the reduced problem of \eqref{SPP:3RD} for 
	$\varepsilon \rightarrow 0$, which is defined as
	\begin{equation}\label{reduced:problem}
	\begin{split}
	- (a(x)z^{\prime}(x))^{\prime}+ b(x)z^{\prime}(x) + c(x)z(x) =& f(x)
	\hspace{1em} x\in \Omega,\\
	z(0) = z(1) =& 0
	\end{split}
	\end{equation}
	is well-defined. Then, for a small $\varepsilon$,
problem \eqref{SPP:3RD} has a unique solution of the form \cite[Theorem 2, Chapter 3]{Malley1974}:
\begin{equation}\label{solution}
u(x)=G(x,\varepsilon)+\varepsilon{\tilde G}(x,\varepsilon)
e^{{\int_1^x{a(t)dt}}/\varepsilon},
\end{equation}
where $G$ and $\tilde G$ have asymptotic power-series expansions in
$\varepsilon $, $G(x,0)=z(x),x\in[0,1]$.
It follows from \eqref{solution} that
\begin{equation}\label{regularity}
|u^{(j)}(x)|\le C(1+\varepsilon^{-j+1}e^{-\alpha(1 - x)/\varepsilon })
\end{equation}
for all $x\in\overline \Omega$ and $j = 0,1,\ldots,\ell$.
Here, $\ell$ is some integer that depends on the regularity of the data.

	\begin{proposition}\cite{Linss2001}\label{Proposition:regularity}
		Let $\ell$ be a non-negative integer. Assume that the problem \eqref{SPP:3RD}
		has a solution which can be decomposed as
		$u=\bar u+u_\varepsilon $,
		where the regular part $\bar u$
		and the layer part $u_\varepsilon$ satisfy
		\begin{equation}\label{u decomposition}
		|{\bar u}^{(j)}|\le C,\hspace{7em}|u_\varepsilon^{(j)}|
		\le C\varepsilon^{1-j}e^{-{\alpha (1-x)}/\varepsilon}
		\end{equation}
		for all $x\in \overline{\Omega}$
		and all nonnegative integers $j=0,1,\ldots,\ell$.	
	\end{proposition}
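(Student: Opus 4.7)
The plan is to read off the decomposition directly from the explicit representation \eqref{solution}, namely to set
\begin{equation*}
\bar u(x):=G(x,\varepsilon),\qquad u_\varepsilon(x):=\varepsilon\,\tilde G(x,\varepsilon)\,e^{\int_1^x a(t)dt/\varepsilon},
\end{equation*}
and then verify the two bounds in \eqref{u decomposition} separately. Everything hinges on controlling derivatives of the exponential factor and on the asymptotic expansions of $G$ and $\tilde G$ cited from \cite{Malley1974}.

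For the regular part, since $G(\cdot,\varepsilon)$ admits an asymptotic power-series expansion in $\varepsilon$ with smooth coefficients on $\overline\Omega$ (the leading term being the solution $z$ of the reduced problem \eqref{reduced:problem}), a standard truncation at order $\ell$ gives $|\bar u^{(j)}(x)|\le C$ for $j=0,1,\ldots,\ell$, uniformly in $\varepsilon$. For the layer part, I would exploit the elementary identity
\begin{equation*}
\frac{d}{dx}e^{\int_1^x a(t)dt/\varepsilon}=\frac{a(x)}{\varepsilon}\,e^{\int_1^x a(t)dt/\varepsilon},
\end{equation*}
which, by induction, yields that the $k$-th derivative of the exponential equals $\varepsilon^{-k}p_k(x)\,e^{\int_1^x a(t)dt/\varepsilon}$ where $p_k$ is a polynomial in $a,a',\ldots,a^{(k-1)}$ with coefficients independent of $\varepsilon$. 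Applying the Leibniz rule to $u_\varepsilon=\varepsilon\,\tilde G\cdot e^{\int_1^x a(t)dt/\varepsilon}$ and using the uniform boundedness $|\tilde G^{(m)}|\le C$ (again from its asymptotic expansion), every term in $u_\varepsilon^{(j)}$ is majorised by $C\,\varepsilon^{1-j}\,e^{\int_1^x a(t)dt/\varepsilon}$. Finally, the assumption $a(x)\ge\alpha>0$ in \eqref{assumption:coefficient} gives $\int_1^x a(t)dt=-\int_x^1 a(t)dt\le-\alpha(1-x)$, so $e^{\int_1^x a(t)dt/\varepsilon}\le e^{-\alpha(1-x)/\varepsilon}$, which completes the layer estimate.

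The main obstacle is not the Leibniz calculation but the rigorous justification of the asymptotic expansions of $G$ and $\tilde G$ with uniformly bounded coefficients up to the required order $\ell$; this is exactly the content of the cited \cite[Theorem 2, Chapter 3]{Malley1974}, whose hypotheses require the smoothness of $a,b,c,f$ and the well-posedness of \eqref{reduced:problem} already assumed above. An alternative route, avoiding the explicit representation, would be to construct $\bar u$ as a truncated outer expansion $\sum_{i=0}^{\ell}\varepsilon^i z_i$ obtained recursively from \eqref{reduced:problem}, define $u_\varepsilon:=u-\bar u$, and then establish pointwise bounds on $u_\varepsilon^{(j)}$ via barrier functions built from $e^{-\alpha(1-x)/\varepsilon}$ together with a comparison principle; the delicate step in that approach is proving such a maximum/comparison principle for the third-order operator in \eqref{SPP:3RD}, which typically proceeds by recasting the equation as a weakly coupled first-/second-order system in the spirit of \cite{Valarmathi2002}.
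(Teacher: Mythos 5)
The paper does not actually prove this proposition --- it is stated as a regularity \emph{assumption}, cited from the literature and justified only by the representation \eqref{solution} and the bound \eqref{regularity} read off from it. Your argument (taking $\bar u = G$, $u_\varepsilon = \varepsilon\,\tilde G\, e^{\int_1^x a(t)dt/\varepsilon}$, then the induction on derivatives of the exponential, the Leibniz rule, and $\int_1^x a(t)\,dt\le-\alpha(1-x)$) is precisely the natural fleshing-out of that same justification and is correct, modulo the uniform-in-$\varepsilon$ boundedness of the $x$-derivatives of $G$ and $\tilde G$ up to order $\ell$, which you rightly identify as the content of the cited asymptotic-expansion theorem rather than something to be re-proved here.
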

	
	In the following analysis, we require $\ell=k+3$
	in Proposition \ref{Proposition:regularity},
	where $k$ is the degree of piecewise polynomials in our finite element space.

\subsection{Layer-adapted meshes}
We shall use the layer-adapted meshes as follows.
Let $\varphi:[0,1/2]\to [0,\infty) $ be a mesh-generating function, with $\varphi(0)=0$,
which is continuous, piecewise continuously differentiable,
and monotonically increasing. Let
\begin{align}\label{tau}
\tau  = \min \Big\{ \frac{1}{2},
\frac{\sigma\varepsilon}{\alpha}\varphi\Big(\frac12\Big)\Big\},
\end{align}
where $\sigma > 0$ is a constant to be determined.
Assume that $\tau  =(\sigma\varepsilon/\alpha)\varphi(1/2)$
	as is typically the case for \eqref{SPP:3RD}.

Let $N \ge 2$ be an even integer and $1-\tau $ be a transition point.
Partition $\Omega={\Omega^c}\cup {\Omega^f}$,
where $\Omega^c=[0,1-\tau]$ 
is the coarse domain with $N/2$ equidistant elements
and $\Omega^f=[1-\tau,1]$
is the refined domain with $N/2$ non-uniform elements.
The mesh points $\{x_i\}_{i=0,1,\dots,N}$ are given by
\begin{align}\label{mesh points}
x_i=
\begin{cases}
\frac{2i}{N}(1-\tau)
&{\rm for}\hspace{0.5em}i=0,1,\ldots,N/2-1,\\
1-\frac{\sigma\varepsilon}{\alpha}\varphi(1-\frac{i}{N})
&{\rm for}\hspace{0.5em}
i = N/2,N/2 + 1,\ldots ,N.
\end{cases}
\end{align}

For varying $\varphi$, we obtain different layer-adapted meshes\yao{;}
see the Shishkin mesh (S-mesh), Bakhvalov-Shishkin mesh (BS mesh),
and the Bakhvalov-type mesh (B-mesh) in Table \ref{Table:mesh:functions}.
Here, $\psi =e^{-\varphi}$ is the mesh characterising function,
which plays an important role in our convergence analysis.

\begin{table}[h]
	\centering
	\caption{Three layer-adapted meshes.}
	\label{Table:mesh:functions}
	\begin{tabular}{cccccc}
		\hline
		&$\varphi(t)$&$\min\varphi^{\prime}(t)$&$\max\varphi^{\prime}(t)$&$\psi(t)$&$\max|\psi^{\prime}(t)|$
		\\ 
		\hline
		S-mesh&$2t\ln N$ &$2\ln N$  &$2\ln N$ &$N^{-2t}$&$2\ln N$\\
		BS-mesh&$-\ln[1-2(1-N^{-1})t]$& 2 & $2N$ &$1-2(1-N^{-1})t$&$2$\\
		B-mesh&$-\ln[1-2(1-\varepsilon)t]$ &2 & 2$\varepsilon^{-1}$
		&$1-2(1-\varepsilon)t$&$2$
		\\ 
		\hline
	\end{tabular}
\end{table}

Suppose $\varepsilon \le N^{-1}$, 
meaning we are in the convection-dominated case.
	For each mesh type in Table \ref{Table:mesh:functions},
	we have $\psi(1/2)\leq N^{-1}$.

\begin{lemma}\cite[Lemma 3.1]{Cheng2020}
	For any $j = N/2+1,\ldots,N$, we define:
	\[
	{\mathcal G}_j=\min \Big\{\frac{h_j}{\varepsilon},1\Big\}
	e^{{-\alpha(1-x_j)}/{\sigma\varepsilon}}.
	\]
	Then, there exists a constant $C>0$ independent of $\varepsilon$ and $N$ such that
	\begin{subequations}
		\begin{align}
		\label{inequality G}
		\max_{N/2+1\leq j\leq N}
		\mathcal{G}_j&\le CN^{-1}\max|\psi^{\prime}|,
		\\
		\label{sum inequality G}
		\sum_{j={N/2}+1}^N{\mathcal G}_j&\le C.
		\end{align}
	\end{subequations}
\end{lemma}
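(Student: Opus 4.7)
The plan is to parametrise the fine mesh by $t_j := 1 - j/N \in [0,1/2]$, so that \eqref{mesh points} delivers the clean identities
\[
e^{-\alpha(1-x_j)/(\sigma\varepsilon)} = e^{-\varphi(t_j)} = \psi(t_j), \qquad \frac{h_j}{\varepsilon} = \frac{\sigma}{\alpha}\bigl[\varphi(t_{j-1})-\varphi(t_j)\bigr] = \frac{\sigma}{\alpha N}\varphi'(\xi_j)
\]
for some $\xi_j\in[t_j,t_{j-1}]$, by the mean-value theorem. The differential identity $\psi'=-\varphi'\psi$ furnishes the bridge $\varphi'(\xi)\psi(\xi)=|\psi'(\xi)|$, which is what converts mesh-size estimates into bounds involving $\max|\psi'|$.

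Next I would separate two regimes that mirror the $\min$ in the definition of $\mathcal G_j$. In regime (A), $h_j/\varepsilon\le 1$, equivalently $\varphi(t_{j-1})-\varphi(t_j)\le\alpha/\sigma$, which forces $\psi(t_j)/\psi(\xi_j)\le e^{\alpha/\sigma}$. In regime (B), $h_j/\varepsilon>1$, which yields $\varphi'(\xi_j)>\alpha N/\sigma$ and hence $\psi(\xi_j)<(\sigma/(\alpha N))\max|\psi'|$. For \eqref{inequality G}, regime (A) gives $\mathcal G_j=(h_j/\varepsilon)\psi(t_j)\le (\sigma e^{\alpha/\sigma}/(\alpha N))\,\varphi'(\xi_j)\psi(\xi_j)=CN^{-1}|\psi'(\xi_j)|$, and regime (B) gives $\mathcal G_j=\psi(t_j)\le\psi(\xi_j)+\max|\psi'|/N\le CN^{-1}\max|\psi'|$, using the MVT on $\psi$ over $[t_j,\xi_j]$.

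For the sum \eqref{sum inequality G} I would split along the same regimes but with different amplifiers. In regime (A), the pointwise inequality $\psi(t_j)\,\varphi'(s)\le e^{\alpha/\sigma}|\psi'(s)|$ valid for $s\in[t_j,t_{j-1}]$ allows the $\psi(t_j)$ factor to move inside the integral, giving
\[
\frac{h_j}{\varepsilon}\psi(t_j) \le C\int_{t_j}^{t_{j-1}}|\psi'(s)|\,ds,
\]
so the regime-(A) contribution is bounded by $C\int_0^{1/2}|\psi'|=C[\psi(0)-\psi(1/2)]\le C$. In regime (B), the inequality $\varphi(t_{j-1})-\varphi(t_j)>\alpha/\sigma$ gives $\psi(t_{j-1})<e^{-\alpha/\sigma}\psi(t_j)$, so the values $\psi(t_j)$ decay by a fixed geometric factor as $j$ decreases within regime (B). Since $\varphi'$ is monotone on $[0,1/2]$ for each of the three meshes in Table \ref{Table:mesh:functions}, regime (B) is a contiguous block of indices starting at $j=N/2+1$, and a geometric-series estimate yields $\sum_{\text{(B)}}\psi(t_j)\le \psi(t_{j_\ast})/(1-e^{-\alpha/\sigma})\le C$.

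The step I expect to take the most care is the B-mesh corner of regime (B): there $\psi(1/2)=\varepsilon$ can be arbitrarily small while $\max|\psi'|$ stays $O(1)$, so a direct bound $\mathcal G_j\le(h_j/\varepsilon)\psi(t_j)$ would not yield \eqref{inequality G}. The argument succeeds only because the $\min\{h_j/\varepsilon,1\}$ in the definition of $\mathcal G_j$ truncates these terms to $\psi(t_j)$, and because monotonicity of $\varphi'$ supplies the geometric-decay structure needed for the regime-(B) sum; confirming these two features mesh-by-mesh is the decisive point I would check before concluding.
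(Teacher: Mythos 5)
The paper does not prove this lemma --- it imports it verbatim from \cite[Lemma~3.1]{Cheng2020} --- so there is no in-paper proof to compare against; your argument is the standard one from that literature (rewrite $e^{-\alpha(1-x_j)/(\sigma\varepsilon)}=\psi(t_j)$ and $h_j/\varepsilon=(\sigma/\alpha)[\varphi(t_{j-1})-\varphi(t_j)]$, then exploit $|\psi'|=\varphi'\psi$), and I find it correct, including the two points you flag as delicate: the $\min\{h_j/\varepsilon,1\}$ truncation is indeed what saves \eqref{inequality G} near $t=1/2$ on the B-mesh, and monotonicity of $\varphi'$ does make regime~(B) a single initial block. One small simplification: the contiguity/geometric-series step is not actually needed for \eqref{sum inequality G}, since in regime~(B) the inequality $\psi(t_{j-1})<e^{-\alpha/\sigma}\psi(t_j)$ gives $\psi(t_j)\le (1-e^{-\alpha/\sigma})^{-1}\bigl[\psi(t_j)-\psi(t_{j-1})\bigr]$, and these differences telescope to $\psi(t_{j_1})-\psi(t_{j_0-1})\le 1$ regardless of how the regime-(B) indices are distributed.
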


\begin{lemma}\label{Lemma:mesh:size}
	For the three meshes in Table \ref{Table:mesh:functions}, we have
	$C \varepsilon N^{-1} \le h_j\le C N^{-1}$. Moreover, for the B-mesh,
	\begin{subequations}
		\begin{align}
		\label{mesh:property:1}
    h_{N/2+j} &\geq \frac{\sigma \varepsilon}{\alpha (j+1)},
\quad j=1,2,\dots,N/2, 
		\\
		\label{mesh:property:2}
		\sum_{j=N/2+2}^N h_j &\leq C\varepsilon\ln N, 
		\end{align}
	\end{subequations}
	where $C>0$ is independent of $\varepsilon$ and $N$.
\end{lemma}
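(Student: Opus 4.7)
The plan is to split the mesh into the coarse piece $\Omega^c$ and the fine piece $\Omega^f$ and treat the two regions separately. On $\Omega^c$ the mesh is uniform with step $h_j = 2(1-\tau)/N$; since $\tau \le 1/2$ by \eqref{tau}, this immediately yields the two-sided bound $C\varepsilon N^{-1} \le h_j \le CN^{-1}$. On $\Omega^f$ I apply the mean value theorem to $\varphi$ on $[1-i/N,\,1-(i-1)/N]$ to write $h_{N/2+i}=\frac{\sigma\varepsilon}{\alpha N}\varphi'(\xi_i)$ and then read off $\min\varphi'$ and $\max\varphi'$ from Table~\ref{Table:mesh:functions}. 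Combined with the convection-dominated hypothesis $\varepsilon\le N^{-1}$, this gives the desired inequality for each of the three meshes: for the S-mesh, $\varepsilon\ln N \le N^{-1}\ln N$ is bounded; for the BS- and B-meshes, the factors $N$ and $\varepsilon^{-1}$ appearing in $\max\varphi'$ cancel the leading $\varepsilon$.

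For the two B-mesh refinements I first insert $\varphi(t)=-\ln[1-2(1-\varepsilon)t]$ into \eqref{mesh points}. With $t_j:=1/2-j/N$ and the identity $1-2(1-\varepsilon)t_j = \varepsilon + 2(1-\varepsilon)j/N$, this produces the explicit formula
\begin{equation*}
h_{N/2+j} = \frac{\sigma\varepsilon}{\alpha}\ln\frac{\varepsilon + 2(1-\varepsilon)j/N}{\varepsilon + 2(1-\varepsilon)(j-1)/N}.
\end{equation*}
To prove \eqref{mesh:property:1} I combine this with the elementary inequality $\ln(a/b)\ge (a-b)/a$ (taking $a=\varepsilon+2(1-\varepsilon)j/N$, $b=\varepsilon+2(1-\varepsilon)(j-1)/N$), which gives $h_{N/2+j}\ge \frac{2\sigma\varepsilon(1-\varepsilon)}{\alpha(N\varepsilon + 2(1-\varepsilon)j)}$. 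The hypothesis $\varepsilon\le N^{-1}$ forces $N\varepsilon \le 1 \le 2(1-\varepsilon)$ for $N\ge 2$, hence $N\varepsilon + 2(1-\varepsilon)j \le 2(1-\varepsilon)(j+1)$, and inserting this bound in the denominator recovers the target lower bound $\sigma\varepsilon/(\alpha(j+1))$.

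For \eqref{mesh:property:2} the log-of-ratio structure of $h_{N/2+j}$ causes the sum to telescope, giving
\begin{equation*}
\sum_{j=N/2+2}^{N} h_j \;=\; \sum_{j=2}^{N/2} h_{N/2+j} \;=\; \frac{\sigma\varepsilon}{\alpha}\ln\frac{1}{\varepsilon + 2(1-\varepsilon)/N} \;\le\; \frac{\sigma\varepsilon}{\alpha}\ln N,
\end{equation*}
the last step using $\varepsilon + 2(1-\varepsilon)/N \ge 2(1-\varepsilon)/N \ge N^{-1}$ for $\varepsilon\le 1/2$. I expect the main obstacle to be the delicate bookkeeping of constants in \eqref{mesh:property:1}: converting the logarithm to a rational expression alone leaves a denominator $N\varepsilon + 2(1-\varepsilon)j$, and the precise target $1/(j+1)$ is only recovered after the inequality $N\varepsilon \le 2(1-\varepsilon)$ is explicitly invoked, so that the factor $2(1-\varepsilon)$ can cancel against the numerator. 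Everything else is a routine mean value estimate or a direct telescoping of logarithms.
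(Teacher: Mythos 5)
Your proof is correct, and for the first assertion and for \eqref{mesh:property:2} it follows essentially the same route as the paper: the coarse part is uniform with $h_j=2(1-\tau)/N$, the fine part is bounded via $\frac{\sigma\varepsilon}{\alpha}N^{-1}\min\varphi'\le h_j\le \frac{\sigma\varepsilon}{\alpha}N^{-1}\max\varphi'$ together with Table \ref{Table:mesh:functions} and $\varepsilon\le N^{-1}$, and your telescoped sum $\frac{\sigma\varepsilon}{\alpha}\ln\frac{1}{\varepsilon+2(1-\varepsilon)/N}$ is exactly the paper's $1-x_{N/2+1}=\frac{\sigma\varepsilon}{\alpha}\ln\frac{N}{2+N\varepsilon-2\varepsilon}$ written differently. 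The one genuine difference is \eqref{mesh:property:1}: the paper simply cites an external lemma (Lemma 3.2 of the two-dimensional convection--diffusion reference) and gives no argument, whereas you supply a self-contained proof from the explicit B-mesh formula, the elementary inequality $\ln(a/b)\ge(a-b)/a$, and the observation $N\varepsilon\le 1\le 2(1-\varepsilon)$, which cleanly produces the stated constant $\sigma\varepsilon/(\alpha(j+1))$. That is a worthwhile addition, since it keeps the paper independent of the cited work. The only blemish is a minor indexing slip in your mean value step (for $h_{N/2+i}$ the relevant interval of $\varphi$ is $[1/2-i/N,\,1/2-(i-1)/N]$, not $[1-i/N,\,1-(i-1)/N]$), which does not affect the bounds.
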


	\begin{proof}
	It is clear that $N^{-1}\leq h_j=2(1-\tau)N^{-1}\leq  2N^{-1}$ for $0\leq j\leq N/2$.
	For $N/2+1\leq j \leq N$, we have 
	\begin{align*}
	\frac{\sigma\varepsilon}{\alpha}N^{-1}\min\varphi^{\prime}
	\leq  h_j
	\leq  \frac{\sigma\varepsilon}{\alpha}N^{-1}\max\varphi^{\prime},
	\end{align*}
	which leads to the conclusion $C \varepsilon N^{-1} \le h_j\le C N^{-1}$
	for each types of layer-adapted meshes in 
	Table \ref{Table:mesh:functions} because $\varepsilon \le N^{-1}$.
	In addition, 
	\eqref{mesh:property:1} holds, see \cite[Lemma 3.2]{Cheng2020}.
	For the B-mesh and $N\geq 2$, one gets 
	\begin{align*}
	\sum_{j=N/2+2}^N h_j=1-x_{N/2+1}
	=\frac{\sigma\varepsilon}{\alpha}
	\varphi\Big(\frac12-\frac{1}{N}\Big)
	= \frac{\sigma\varepsilon}{\alpha} \ln\Big(\frac{N}{2+N\varepsilon-2\varepsilon}\Big)
	\le\frac{\sigma\varepsilon}{\alpha} \ln \frac{N}{2}\leq C\varepsilon\ln N.
	\end{align*}
	This completes the proof of Lemma \ref{Lemma:mesh:size}.
\end{proof}


\subsection{The LDG method}
Let $\Omega _N:=\{I_j =[x_{j-1},x_j],j = 1,2, \ldots ,N\} $
be a partition of $\Omega $ with $x_0 = 0$ and $x_N = 1$.
Let $h_j = x_j - x_{j - 1}$ be the mesh size of the
element $I_j$, $j = 1,2, \ldots ,N$.
The discontinuous finite element space is defined as
\begin{equation}\label{discontinuous finite element space}
\mathcal{V}_N = \{ v \in L^2(\Omega ):
v|_{I_j} \in \mathcal {P}^k(I_j),j = 1,\ldots,N\} ,
\end{equation}
where $\mathcal {P}^k(I_j)$ denotes the space
of polynomials in $I_j$ of degree at most $k \ge 0$.
The functions in $\mathcal {V}_N$ allow discontinuity across element interfaces.
We denote $v_j^ \pm  = \lim _{x \to x_j^ \pm }v(x)$.
We define a jump as $\jump{v}_j=v_j^+-v_j^-$
for $j = 1,2, \ldots ,N - 1$, $\jump{v}_0=v_0^+$
and $\jump{v}_N=-v_N^-$.

Rewrite the problem (\ref{SPP:3RD})
	into an equivalent first-order system
\begin{align*}
&p = u^{\prime},\quad
q = \varepsilon p^{\prime},\quad
q^{\prime} - (ap)^{\prime} + bu^{\prime} + cu = f.
\end{align*}
Then, the LDG method reads:\\
	\indent
	Find the numerical solution
$\wph=(\uph,\pph,\qph) \in \mathcal {V}_N^3 :=
\mathcal {V}_N \times \mathcal {V}_N \times \mathcal{ V}_N$
such that in each element $I_j$,
\begin{subequations}\label{LDG:SCHEME}
	\begin{align}
	\label{LDG form p}
	\dual{\pph}{r}_{I_j}+\dual{\uph}{r^{\prime}}_{I_j}-
	\hat {\uph}_{j}r_{j}^{-}+\hat {\uph}_{j-1}r_{j-1}^{+}&= 0,
	\\
	\label{LDG form q}
	\dual{\qph}{s}_{I_j}+\varepsilon(\dual{\pph}{s^{\prime}}_{I_j}-
	\hat{\pph}_{j}s_{j}^{-}+\hat{\pph}_{j-1}s_{j-1}^{+}) &= 0,
	\\
	-\dual{\qph}{v^{\prime}}_{I_j}+
	\hat{\qph}_{j}v_{j}^{-}-\hat{\qph}_{j-1}v_{j-1}^{+}\nonumber\\
	+\dual{a\pph}{v^{\prime}}_{I_j}-{a_j}\tilde{\pph}_{j}v_{j}^{-}
	+a_{j-1}\tilde{\pph}_{j-1}v_{j-1}^{+}\nonumber\\
	\label{LDG form u}
	-\dual{b\uph}{v^{\prime}}_{I_j}
	+\widetilde{b\uph}_{j}v_{j}^{-}-\widetilde{b\uph}_{j-1}v_{j-1}^{+}
	+\dual{(c - b^{\prime})\uph}{v}_{I_j}&=\dual{f}{v}_{I_j}
	\end{align}
\end{subequations}
hold for any test function $\bm{\chi}=(v,r,s)\in\mathcal{V}_N^3$,
where $\dual{\cdot}{\cdot}_{I_j}$
is the inner product in ${L^2}(I_j)$, 
the "hat" terms and "tilde" terms are
the numerical fluxes defined in Table \ref{table:fluxes}.
\begin{table}[htp]
	\caption{Numerical fluxes in \eqref{LDG:SCHEME}}
	\label{table:fluxes}
	\small
	\centering
	\begin{tabular}{cccccc}
		\hline
		& $j=1,2,\dots,N-1$
		&
		& $j=0$
		&
		& $j=N$
		\\
		\hline
		$\hat{U}_{j}$             & $U^{-}_{j}$ &   & $0$   &&$0$ \\
		$\hat{P}_{j}$             & $P^{+}_{j}$ &   & $P^{+}_{0}$   &&$0$ \\
		$\hat{Q}_{j}$    & $Q^{+}_{j}$  &  & $Q^{+}_{0}$ &&$Q^{-}_{N}$  \\
		$\tilde{P}_{j}$             & $P^{+}_{j}$ &   & $P^{+}_{0}$   &&$P^{-}_{N}$ \\
		\smallskip
		$\widetilde{bU}_{j}$
		& $\frac{b_j + |b_j|}{2}U_j^ -  + \frac{b_j - |b_j|}{2}U_j^ +$ &
		& $ \frac{b_0 - |b_0|}{2}U_0^ +$   &
		&$\frac{b_N + |b_N|}{2}U_N^ -$ \\
		\hline
	\end{tabular}
\end{table}
Note that at the interior element boundary points,
we choose an upwind flux
for the convection part $\widetilde{bU}$,
an alternating flux for the diffusion part $(\hat{U}_{j},\tilde{P}_{j})$,
and the dispersive part $(\hat{U}_{j},\hat{Q}_{j})$.
The choice of flux $\hat{P}_{j}$
ensures stability of the numerical scheme.
This completes the definition of the LDG method for problem (\ref{SPP:3RD}).

Denoting by $\dual{w}{v}=\sum_{j=1}^N\dual{w}{v}_{I_j}$,
we can rewrite the above-mentioned LDG method in a compact form:\\
\indent
Find ${\wph}= (\uph,\pph,\qph)\in\mathcal{V}_N^3$, such that
\begin{equation}\label{compact:form}
B(\wph;\vph)=\dual{f}{v}
\hspace{2em}\forall \vph=(v,r,s)\in\mathcal{V}_N^3,
\end{equation}
where
\begin{align}\label{bilinear form}
B(\wph;\vph)&=\dual{\pph}{r}+\dual{\uph}{r^{\prime}}
+\sum_{j=1}^{N-1}\uph_{j}^{-}\jump{r}_{j}
\nonumber \\
&+\dual{\qph}{s}+\varepsilon\left(\dual{\pph}{s^{\prime}}
+\sum_{j=1}^{N-1}\pph_{j}^{+}\jump{s}_{j}+\pph_{0}^{+}s_{0}^{+}\right)
\nonumber\\
&-\dual{\qph}{v^{\prime}}-\sum_{j=1}^{N-1}\qph_{j}^{+}\jump{v}_{j}
+\qph_{N}^{-}v_{N}^{-}-\qph_{0}^+v_{0}^{+}
\\
&+\dual{a\pph}{v^{\prime}}+\sum_{j=1}^{N-1}a_j\pph_{j}^{+}\jump{v}_{j}
-a_N\pph_{N}^{-}v_{N}^{-}+a_0\pph_{0}^{+}v_{0}^{+}
\nonumber\\
&+\dual{(c - b^{\prime})\uph}{v}-\dual{b\uph}{v^{\prime}}-\sum_{j=1}^{N-1}
\left(\frac{b_j+|b_j|}{2}\uph_{j}^{-}+\frac{b_j-|b_j|}{2}\uph_{j}^{+}\right)\jump{v}_{j}
\nonumber\\
&
+\frac{b_N+|b_N|}{2}\uph_{N}^{-}v_{N}^{-}
-\frac{b_0-|b_0|}{2}\uph_{0}^{+}v_{0}^{+}.
\nonumber
\end{align}

By integrating parts and making trivial manipulations, 
one arrives at the energy norm
\begin{equation}
\begin{split}\label{energy:norm}
\enorm{\wph}^2
&:=
B(\uph,\pph,\qph;\uph,-\qph+a\pph,\pph)\nonumber\\
&
=\frac{\varepsilon}{2}\sum_{j=0}^N\jump{P}_{j}^2+\norm{a^{1/2}P}^2
+\norm{(c-{b^{\prime}/2)}^{1/2}\uph}^2
+\frac{1}{2}\sum_{j=0}^N|b_j|\jump{\uph}_{j}^2,
\end{split}
\end{equation}
which implies the existence and uniqueness of 
numerical solution defined by (\ref{compact:form}),
because $U=P=Q=0$ if $f=0$ and $\vph$ is taken suitably
	in \eqref{compact:form}.

\section{Error analysis}
\label{Sec:error}

	This section focuses on the error estimate in the energy norm \eqref{energy:norm}.
We denote the error by $\bm{e}=(u-\uph,p-\pph,q-\qph)$
and split it into two parts: 
$\bm{e}=\bm{w}-\wph=(\bm{w}-\bm{\Pi w})-(\wph-\bm{\Pi w}):=\bm{\eta-\xi}$ with
\begin{subequations}
	\begin{align}
	\label{eta error}
	\bm{\eta}&=(\eta_u,\eta_p,\eta_q)
	=(u-\pi^{-}u,p-\pi^{+}p,q-\pi^{+}q),\\
	\label{xi error}
	\bm{\xi} & =(\xi_u,\xi_p,\xi_q)
	=(\uph-\pi^{-}u,\pph-\pi^{+}p,\qph-\pi^{+}q) \in \mathcal {V}_N^3,
	\end{align}
\end{subequations}
	where $\bm{\Pi w}:=(\pi^{-}u,\pi^{+}p,\pi^{+}q): (H^1(\Omega_N))^3\to \mathcal {V}_N^3$ denotes the local Gauss-Radau projection that
	will be defined below.
	
	In Section \ref{Sec:projection}, we present an estimation of \emph{ approximation error} $\bm{\eta}$. 
	Then, we utilise its property to derive 
	the bound of the \emph{ projection error} $\bm{\xi}$
	and, hence, the error $\bm{e}$ in Section \ref{Sec:main:result}.

\subsection{The approximation error}
\label{Sec:projection}

To derive the error estimate,
we use local Gauss-Radau projections $\pi^\pm$,
defined as follows.
For any $z\in H^1(\Omega_N )$, we have
$\pi^\pm z\in\mathcal{V}_N$ satisfies
\begin{align}\label{projection}
\dual{\pi^{+}z}{v}_{I_j}&\; =\dual{z}{v}_{I_j}
\quad \forall v\in{\mathcal P}^{k-1}(I_j),\quad
(\pi^+ z)_{j-1}^+ = z_{j-1}^+ ;\nonumber\\
\dual{\pi^{-}z}{v}_{I_j}&\; = \dual{z}{v}_{I_j}
\quad \forall v\in{\mathcal P}^{k-1}(I_j),
\quad (\pi^- z)_{j}^- = z_{j}^- 
\end{align}
on each element $I_j=[x_{j-1},x_j]$, $j=1,2,\ldots,N$.
From \cite{P.G.Ciarlet.1978}, one could
verify the existence and uniqueness of these projections.
Furthermore,
denote $\norm{v}_{I_j}=\norm{v}_{L^2(I_j)}$
and $\norm{v}_{L^\infty(I_j)}$ for the typical $L^2$
and $L^\infty $ norms on $I_j$ respectively; then,
one obtains the following properties
\begin{subequations}\label{projections}
	\begin{align}
	\label{projection pi-}
	\norm{\pi^- z}_{I_j}&\le C[\norm{z}_{I_j}+ h_j^{1/2}|z^{-}_j|],\\
	\label{projection pi+}
	\norm{\pi^+ z}_{I_j}&\le C[\norm{z}_{I_j}+ h_j^{1/2}|z^{+}_{j-1}|],\\
	\label{L endless}
	\norm{\pi^\pm z}_{L^\infty(I_j)}&\le C\norm{z}_{L^\infty(I_j)},\\
	\label{global projection}
	\norm{z-\pi^\pm z}_{L^\ell(I_j)}&\le
	Ch_j^{k+1}\norm{z^{(k+1)}}_{L^\ell(I_j)},\hspace{2em}\ell = 2,\infty, 
	\end{align}
\end{subequations}
where $C > 0$ is independent of the element size $h_j$ and function $z$.

\begin{lemma}\label{Lemma:approximation:property}
	Let $\Omega_N$ be a layer-adapted mesh (\ref{mesh points})
	with $\sigma \ge k+1.5$.
	For the function $u$ satisfying Proposition \ref{Proposition:regularity}, we have
	\begin{subequations}\label{norm sum}
		\begin{align}
		\label{norm u}
		\norm{u-\pi^{-} u}&\le C\Big[\varepsilon
		{(N^{-1}\max|\psi^{\prime}|)}^{k+1}+N^{-(k+1)}\Big],\\
		\label{norm p}
		\norm{p-\pi^{+} p}&\le C\Big[\sqrt\varepsilon
		{(N^{-1}\max|\psi^{\prime}|)}^{k+1}+N^{-(k+1)} \Big],\\
		\label{norm q}
		\norm{q-\pi^{+} q}&\le C\sqrt\varepsilon(N^{-1}\max|\psi^{\prime}|)^{k+1},\\
		\label{local norm p}
		\norm{p-\pi^{+} p}_{L^\infty(\Omega^f)}&\le C(N^{-1}\max|\psi^{\prime}|)^{k+1},\\
		\label{local norm q}
		\norm{q-\pi^{+} q}_{L^\infty(\Omega^f)}&\le C(N^{-1}\max|\psi^{\prime}|)^{k+1},\\
		\label{u jump norm}
		\left(\sum_{j=0}^{N}\jump{u-\pi^-u}^2_j \right)^{1/2}
		&\le C\left[\varepsilon(N^{-1}\max|\psi^{\prime}|)^{k+1/2}+ N^{-(k + 1/2)}\right],
		\\
		\label{p:jump:norm}
		\left(\sum_{j=0}^{N}\jump{p-\pi^+p}^2_j \right)^{1/2}
		&\le C(N^{-1}\max|\psi^{\prime}|)^{k+1/2},	
		\end{align}
	\end{subequations}
	where $p=u^{\prime}$, $q=\varepsilon u^{\prime\prime}$ and 
	$C > 0$ is independent of $\varepsilon$ and $N$.
\end{lemma}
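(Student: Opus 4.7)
The plan is to apply the Shishkin-type decomposition $u = \bar u + u_\varepsilon$ from Proposition \ref{Proposition:regularity} and then split each left-hand side in \eqref{norm sum} into contributions from the regular part on all of $\Omega$, from the layer part on the coarse region $\Omega^c = [0,1-\tau]$, and from the layer part on the fine region $\Omega^f = [1-\tau,1]$. Analogous decompositions for $p = u'$ and $q = \varepsilon u''$ follow by differentiating the one for $u$, which is legitimate because Proposition \ref{Proposition:regularity} is invoked with $\ell = k+3$.

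First I would treat the regular part: because Lemma \ref{Lemma:mesh:size} gives $h_j \le CN^{-1}$ on every element and the relevant derivatives of $\bar u$, $\bar p$, and $\varepsilon\bar q$ are uniformly bounded, the standard estimate \eqref{global projection} yields the $N^{-(k+1)}$ summands of \eqref{norm u}--\eqref{norm p} (and a strictly higher-order contribution absorbed in \eqref{norm q}). For the layer part on $\Omega^c$ I would use the $L^\infty$ stability \eqref{L endless} together with the exponential decay: on $\Omega^c$ one has $e^{-\alpha(1-x)/\varepsilon} \le e^{-\alpha\tau/\varepsilon} = \psi(1/2)^\sigma \le N^{-\sigma}$, so with $\sigma \ge k + 3/2$ this contribution is dominated by every term on the right-hand sides of \eqref{norm sum}. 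The core of the argument is the layer part on $\Omega^f$: invoking \eqref{global projection} elementwise with $|u_\varepsilon^{(k+1)}| \le C\varepsilon^{-k}e^{-\alpha(1-x)/\varepsilon}$ and $|p_\varepsilon^{(k+1)}|, |q_\varepsilon^{(k+1)}| \le C\varepsilon^{-(k+1)}e^{-\alpha(1-x)/\varepsilon}$, combined with the mesh identity $h_j = (\sigma\varepsilon/\alpha)N^{-1}\varphi'(\xi_j)$ and the link $|\psi'| = \varphi'\psi$ between the mesh-generating and mesh-characterising functions, converts $(h_j/\varepsilon)^{k+1}$ times the local exponential decay into powers of $N^{-1}\max|\psi'|$. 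Integrating over $\Omega^f$ (for $L^2$) supplies an additional factor $\sqrt\varepsilon$, whereas the $L^\infty$ bounds \eqref{local norm p}--\eqref{local norm q} do not, producing exactly the $\varepsilon$, $\sqrt\varepsilon$, or no-$\varepsilon$ prefactors appearing in \eqref{norm u}--\eqref{local norm q}.

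For the jump bounds \eqref{u jump norm}--\eqref{p:jump:norm}, I would exploit that $\pi^-$ matches the exact value at the right endpoint of each element and $\pi^+$ at the left, so that each interior jump $\jump{u-\pi^-u}_j$ reduces to a single one-sided value bounded by $\norm{u-\pi^-u}_{L^\infty(I_{j+1})}$, and analogously for $\pi^+p$; summing the squares of $N$ such terms and inserting the $L^\infty$ estimates just established on $\Omega^c$ and $\Omega^f$ costs the extra factor of $\sqrt N$ that accounts for the exponent $k+1/2$. The main obstacle is the uniform bookkeeping in the $\Omega^f$ layer estimate across the three meshes of Table \ref{Table:mesh:functions}, since $\max|\varphi'|$ behaves very differently in each case (logarithmic for the S-mesh, linear in $N$ for the BS-mesh, and of order $\varepsilon^{-1}$ for the B-mesh), yet the final bound must be expressed in the single uniform quantity $N^{-1}\max|\psi'|$; this is precisely the framework developed in \cite{Cheng2021}, whose technical machinery can be imported here essentially without change.
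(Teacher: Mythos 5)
Your proposal is correct and follows essentially the same route as the paper: Shishkin decomposition $u=\bar u+u_\varepsilon$, the standard estimate \eqref{global projection} for the regular part, stability plus the smallness $e^{-\alpha\tau/\varepsilon}\le N^{-\sigma}$ for the layer part on $\Omega^c$, the elementwise minimum of stability and approximation on $\Omega^f$, and reduction of the jumps to one-sided $L^\infty$ values via the collocation property of $\pi^\pm$. The only place to be careful is the sum over the $N/2$ fine-region elements: the paper controls it by splitting off one factor of $\mathcal{G}_j$ and using $\sum_j\mathcal{G}_j\le C$ together with $\max_j\mathcal{G}_j\le CN^{-1}\max|\psi'|$ (Lemma 1, and this is also where $\sigma\ge k+1.5$ enters for \eqref{norm p}--\eqref{norm q}), rather than simply integrating over $\Omega^f$, which would cost an extra logarithmic factor.
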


\begin{proof}
We would like to show these inequalities individually.
The main idea is to fully use the stability and approximation properties
\eqref{projections} for the function $u$ satisfying
Proposition \ref{Proposition:regularity}.

(1)We first show (\ref{norm u}).
For notational simplification, we denote 
	$\eta_{u_z}=u_z-\pi^- u_z$ for $u_z\in\{\bar u,u_\varepsilon\}$.
Using (\ref{global projection}) for the regular part $\bar u$ yields:
\begin{equation}\label{estimate u-}
\norm{\eta_{\bar u}}^2 =\sum_{j=1}^N\norm{\eta_{\bar u}}_{I_j}^2
\le C\sum_{j=1}^N h_j^{2(k+1)}\norm{{\bar u}^{(k+1)}}_{I_j}^2
\le CN^{-2(k+1)}
\end{equation}
because $h_j\le CN^{-1} (j=1,2,\ldots,N)$ from Lemma 2.

To bound the approximation error for the layer component $u_\varepsilon$,
we proceed as follows.
First, using (\ref{projection pi-}) and (\ref{u decomposition}), we get
\begin{align}\label{A1}
\sum_{j=1}^{N/2}\norm{\eta_{u_\varepsilon}}_{I_j}^2
&\le C\sum_{j=1}^{N/2}\left[\norm{u_\varepsilon}_{I_j}^2+N^{-1}|u_\varepsilon(x_j)|^2\right]
\nonumber\\
&\le C\int_0^{1-\tau}\varepsilon^2e^{{- 2\alpha(1-x)}/{\varepsilon}}dx
+CN^{-1}\varepsilon^2e^{{- 2\alpha(1-x_{N/2})}/{\varepsilon}}
\nonumber\\
&\le C\varepsilon^2(\varepsilon+N^{-1})N^{-2\sigma},
\end{align}
where we used
$\psi(1/2)\leq N^{-1}$.
Second,
	using $h_j\le CN^{-1} (j=1,2,\ldots,N)$, (\ref{L endless})
	and (\ref{global projection})
	along with $\sigma \ge k+1$, we obtain
\begin{align}\label{A2}
\sum_{j=N/2+1}^N\norm{\eta_{u_\varepsilon}}_{I_j}^2
&\le CN^{-1}\sum_{j=N/2+1}^N
\norm{u_\varepsilon-\pi^{-}{u_\varepsilon}}_{{L^\infty}(I_j)}^2
\nonumber\\
&\le CN^{-1}\sum_{j=N/2+1}^N\min\Big\{\norm{u_\varepsilon}_{{L^\infty}(I_j)}^2,
h_j^{2(k+1)}\norm{u_\varepsilon^{(k+1)}} _{{L^\infty}(I_j)}^2\Big\}
\nonumber\\
&\le CN^{-1}\varepsilon^2
\sum_{j=N/2+1}^N\Big(\min\Big\{\frac{h_j}{\varepsilon},1\Big\}
e^{{-\alpha(1-{x_j})}/{\sigma \varepsilon}}\Big)^{2(k+1)}
\nonumber\\
&\le CN^{-1}\varepsilon^2\max_{N/2+1\le j\le N}\mathcal{G}_j^{2k+1}
\sum_{j=N/2+1}^N\mathcal{G}_j
\nonumber\\
&\le C\varepsilon^2N^{-1}\Big(N^{-1}\max|\psi^{\prime}| \Big)^{2k+1},
\end{align}
where (\ref{inequality G})-(\ref{sum inequality G}) were used.
Consequently, $\norm{\eta_u}$ follows from (\ref{estimate u-})-(\ref{A2}), triangle inequality and $\sigma \ge k+1$.

(2) We next show (\ref{norm p}) and (\ref{norm q}).
Denote $p =\bar p+{p_\varepsilon}
	:=\bar u^{\prime}+u^{\prime}_{\varepsilon}$,
	where $	|\bar p^{(j)}|\le C$ and $|p_\varepsilon^{(j)}|\le
	C\varepsilon^{-j}e^{-\alpha(1-x)/\varepsilon}$
	for $j\leq k+1$.
	Let $\eta_{p_z}=p_z-\pi^+ p_z$ for $p_z\in\{\bar p,p_\varepsilon\}$.


Evidently 
$\norm{\eta_{\bar p}}\le \norm{\eta_{\bar p}}_{L^{\infty}(\Omega_N)}\le CN^{-(k+1)}$.
Projection $\pi^+$ has good stability
for the monotone increasing function 
$x\mapsto e^{-\alpha(1-x)/\varepsilon}$,
which results in layer approximation in the rough region to satisfy
\begin{align}\label{B1}
\sum_{j=1}^{N/2}\norm{\eta_{p_\varepsilon}}_{I_j}^2
&\le C\sum_{j=1}^{N/2}
\left[\norm{p_\varepsilon}_{I_j}^2 + h_j|p_\varepsilon(x_{j-1})|^2\right]
\le
C \sum_{j=1}^{N/2} \left[\norm{e^{{-\alpha (1-x)}/\varepsilon}}_{I_j}^2 + h_je^{{-2\alpha (1-x_{j-1})}/\varepsilon}\right]\nonumber\\
&\le C\int_0^{1-\tau}e^{{-2\alpha(1-x)}/{\varepsilon}} dx
\le C\varepsilon N^{-2\sigma}.
\end{align}
Hence, for $\sigma \ge k+1$ we get 
\begin{align}\label{eta:p:smooth:region}
\norm{\eta_{p}}_{\Omega^c}
\leq C\Big[\norm{\eta_{p_\varepsilon}}_{\Omega^c}
+\norm{\eta_{\bar p}}_{\Omega^c}\Big]
\leq C\left[N^{-(k+1)} + \sqrt \varepsilon N^{-\sigma} \right]
\leq CN^{-(k+1)}.
\end{align}

The proof is similar as in (\ref{A2}), for a larger $\sigma \ge k+1.5$,
one bounds the layer approximation in the fine region by
\begin{align}\label{B2}
\sum_{j=N/2+1}^N\norm{\eta_{p_\varepsilon}}_{I_j}^2
&\le C\sum_{j={N/2+1}}^N\min \Big\{h_j^{2(k+1)}
\norm{{p_\varepsilon}^{(k+1)}}_{I_j}^2,
h_j\big|p_\varepsilon(x_{j-1})\big|^2
+\norm{p_\varepsilon}_{I_j}^2 \Big\}
\nonumber\\
& \le C\sum_{j=N/2+1}^N \min \Big\{
{\Big(\frac{h_j}{\varepsilon}\Big)}^{2(k+1)},1\Big\}
\norm{e^{{-\alpha(1-x)}/{\varepsilon}}}_{I_j}^2
\nonumber\\
&\le C\sum_{j=N/2+1}^N \varepsilon\Big(
\min \Big\{{\frac{h_j}{\varepsilon},1}\Big\}
e^{{-\alpha(1-{x_j})}/{\sigma\varepsilon}}\Big)^{2(k+{3/2})}
\nonumber\\
&\le C\varepsilon \max_{N/2+1\le j \le N}
{\mathcal{G}_j}^{2(k+1)}\sum_{j=N/2+1}^N \mathcal {G}_j
\le C\varepsilon( N^{-1}\max|\psi^{\prime}|)^{2(k+1)}.
\end{align}
Hence, we get 
\begin{align}\label{eta:p:layer:region}
\norm{\eta_{p}}_{\Omega^f}\leq 
C\Big[\norm{\eta_{p_\varepsilon}}_{\Omega^f}
+\norm{\eta_{\bar p}}_{\Omega^f}\Big]
\leq
C\left[\sqrt{\varepsilon}(N^{-1}\max|\psi^{\prime}|)^{(k+1)}
+|\Omega^f|^{1/2}N^{-(k+1)}\right].
\end{align}
Due to \eqref{eta:p:smooth:region} and \eqref{eta:p:layer:region},
so \eqref{norm p} is proved. In a similar fashion, (\ref{norm q}) can be proved.

(3) We now show (\ref{local norm p}) and (\ref{local norm q}).
This follows from
stability and approximation property of $\pi^+$
under $L^{\infty}$norm.
For example, for each element $I_j$,
\[\norm{\eta_{\bar p}}_{{L^\infty}(I_j)}
\le CN^{-(k+1)}\norm{{\bar p}^{(k+1)}}_{{L^\infty}(I_j)}
\le CN^{-(k+1)}.\]
When $j=N/2+1,\ldots,N$, using (\ref{L endless}), (\ref{global projection}) and (\ref{inequality G}),
we get
\begin{align}\label{local:estimate:pep}
\norm{\eta_{p_\varepsilon}}_{{L^\infty}(I_j)}
&\le C\min\Big\{\norm{p_\varepsilon}_{{L^\infty}(I_j)},
h_j^{k+1}\norm{p_\varepsilon^{(k+1)}}_{{L^\infty}(I_j)}\Big\}
\nonumber\\
&\le C{\mathcal{G}_j}^{k+1}\le C(N^{-1}\max|\psi^{\prime}|)^{k+1}.
\end{align}
This immediately leads to \eqref{local norm p}.
Analogously, we can prove (\ref{local norm q}).

(4)We then present (\ref{u jump norm}).
From $L^\infty$approximation property (\ref{global projection}) and (\ref{u decomposition}), 
we have
\[
\sum_{j=0}^{N}\jump{\eta_{\bar u}}_j^2
\le C\sum_{j=1}^N \norm{\eta_{\bar u}}_{{L^\infty}(I_j)}^2
\le C\sum_{j=1}^N N^{-2(k+1)}\le CN^{-(2k+1)}.
\]
Using Lemma 1 and (\ref{L endless}), (\ref{global projection}) with
	$\ell=\infty$, Proposition \ref{Proposition:regularity}
	and $\sigma\geq k+1$, we get

\begin{align}\label{sum:u:jump:norm}
\sum_{j=0}^{N}\jump{\eta_{u_\varepsilon}}_j^2
&\le C\sum_{j=1}^{N/2}
\norm{\eta_{u_\varepsilon}}_{{L^\infty}(I_j)}^2
+ C\sum_{j= N/2+1}^N
\norm{\eta_{u_\varepsilon}}_{{L^\infty}(I_j)}^2
\nonumber\\
&
	\le C \sum_{j=1}^{N/2} \norm{u_\varepsilon}_{{L^\infty}(I_j)}^2
	+C\sum_{j= N/2+1}^N
	\min\Big\{\norm{u_\varepsilon}^2_{{L^\infty}(I_j)},
	h_j^{2(k+1)}\norm{u_\varepsilon^{(k+1)}}^2_{{L^\infty}(I_j)}\Big\}
\nonumber\\
&
\le
	C\varepsilon^2 N^{-2\sigma+1}
	+C\varepsilon^2 \max_{N/2+1 \le j \le N}{\mathcal{G}_j}^{2k+1}
	\sum_{j=N/2+1}^N \mathcal{G}_j
%
\nonumber\\
&\le C\varepsilon^2(N^{-1}\max|\psi^{\prime}|)^{2k+1}.
\end{align}
Consequently, (\ref{u jump norm}) follows.

(5) We finally show (\ref{p:jump:norm}).
The proof is similar as in \eqref{u jump norm}. 
In fact, one has
\begin{align*}
\sum_{j=0}^{N}\jump{\eta_{p}}_j^2
&\le C\sum_{j=1}^N \norm{\eta_{\bar p}}_{{L^\infty}(I_j)}^2
+C\sum_{j=1}^{N}
\norm{\eta_{p_\varepsilon}}_{{L^\infty}(I_j)}^2
\\
& \le CN^{-(2k+1)} + C N^{-2\sigma+1}
+ C\max_{N/2+1 \le j \le N}{\mathcal{G}_j}^{2k+1}
\sum_{j=N/2+1}^N \mathcal{G}_j
\\
& \le C(N^{-1}\max|\psi^{\prime}|)^{2k+1}.
\end{align*}
This completes the whole proof of this lemma.
\end{proof}

\subsection{Main result}
\label{Sec:main:result}
We obtain the following energy-norm error estimate:

\begin{theorem}
	\label{thm:error:estimate}
	Let $\bm w =(u,p,q)=(u,u^{\prime},\varepsilon u^{\prime\prime})$
	be the exact solution of problem (\ref{SPP:3RD}) satisfying
	Proposition \ref{Proposition:regularity}
	and $\wph =(\uph,\pph,\qph)\in{\mathcal V}_N^3$ be the numerical solution of the LDG scheme
	(\ref{compact:form}) on the three layer-adapted meshes
	of Table \ref{Table:mesh:functions}
	with $\sigma\ge k+1.5$. Then, we have
	\begin{equation}\label{main:result}
	\enorm{\bm w-\wph} 
\le 
\begin{cases}
C\left(\sqrt{\varepsilon}(N^{-1}\ln N)^{k}+(N^{-1}\ln N)^{k+1}+N^{-(k+1/2)}\right)
&\text{for S-mesh,}\\
C\left(\sqrt{\varepsilon}N^{-k}(\ln N)^{1/2}
+N^{-(k+1/2)}\right)
&\text{for BS-mesh and B-mesh,}
\end{cases}
	\end{equation}
 where $C > 0$ is a constant independent of $\varepsilon $ and $N$.
\end{theorem}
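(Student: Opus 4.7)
The plan is to apply the standard Galerkin--orthogonality machinery for LDG analysis, then estimate $\bm\xi$ through the energy identity \eqref{energy:norm}. Since both $\bm w=(u,u^{\prime},\varepsilon u^{\prime\prime})$ and $\wph$ satisfy \eqref{compact:form}, we have $B(\bm w-\wph;\bm\chi)=0$ for all $\bm\chi\in\mathcal V_N^3$, hence with $\bm e=\bm\eta-\bm\xi$,
$B(\bm\xi;\bm\chi)=B(\bm\eta;\bm\chi)$. Testing with the particular combination $\bm\chi=(\xi_u,-\xi_q+a\xi_p,\xi_p)$ that realises \eqref{energy:norm} yields $\enorm{\bm\xi}^2=B(\bm\eta;\bm\chi)$, and the task reduces to bounding $|B(\bm\eta;\bm\chi)|$ by $C\cdot(\text{rhs of \eqref{main:result}})\cdot\enorm{\bm\xi}$ so one factor of $\enorm{\bm\xi}$ can be divided out.

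Next I would expand $B(\bm\eta;\bm\chi)$ term-by-term following \eqref{bilinear form} and exploit the defining orthogonality of the Gauss--Radau projections in \eqref{projection}. Volume pairings such as $\dual{\eta_u}{r^{\prime}}_{I_j}$, $\dual{\eta_p}{s^{\prime}}_{I_j}$, $\dual{\eta_q}{v^{\prime}}_{I_j}$ vanish because the derivative of a degree-$k$ test function lies in $\mathcal P^{k-1}$, while the one-sided identities $(\eta_u)_j^-=(\eta_p)_{j-1}^+=(\eta_q)_{j-1}^+=0$ annihilate all interior flux contributions. Variable-coefficient pairings $\dual{a\eta_p}{v^{\prime}}_{I_j}$ and $\dual{b\eta_u}{v^{\prime}}_{I_j}$ are treated by replacing the coefficient with its element average (the constant part is absorbed by orthogonality, the $O(h_j)$ remainder by Cauchy--Schwarz). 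The surviving terms take the schematic form $\|\eta_\bullet\|\cdot\|\xi_\bullet\|$ or $(\sum\jump{\eta_\bullet}_j^2)^{1/2}(\sum\jump{\xi_\bullet}_j^2)^{1/2}$, which by Lemma \ref{Lemma:approximation:property} and the norms controlled by $\enorm{\bm\xi}$ can be absorbed after Young's inequality. The handful of boundary contributions at $x=0,1$ are handled by the alternating and upwind choices in Table \ref{table:fluxes}.

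The principal difficulty, flagged in the introduction, is that the energy norm contains no term controlling $\xi_q$; yet $\bm\chi$ involves $-\xi_q+a\xi_p$, so pairings coupling $\eta_q$ (or $\eta_p$) to $\xi_q$ — and to $a\xi_p$, which has degree $k$ on each element and is therefore not killed by $\pi^+$-orthogonality — require a non-trivial detour. The idea is to use the second scheme equation \eqref{LDG form q}: writing it for $(\pi^+p,\pi^+q)$ and comparing to the same relation satisfied exactly by $(p,q)$ yields, for every $s$, a Gauss--Radau identity rewriting $\dual{\eta_q}{s}_{I_j}$ as $-\varepsilon\dual{\eta_p}{s^{\prime}}_{I_j}$ plus a boundary contribution of the form $\varepsilon\eta_p(x_j^-)s_j^-$. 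Substituting this into the surviving $\eta_q$-pieces trades them for quantities like $\sqrt\varepsilon\|\eta_p\|\cdot\sqrt\varepsilon\|\xi_p^{\prime}\|$, tamed by an inverse inequality together with $\sqrt\varepsilon/h_j=O(1)$ from Lemma \ref{Lemma:mesh:size}, plus interface jumps of $\xi_p$ already controlled by the $\sqrt\varepsilon\jump{\xi_p}_j$ term in \eqref{energy:norm} and by \eqref{local norm p}, \eqref{p:jump:norm}. This is precisely the "control of the second-derivative projection error by the first-derivative and primal interface jumps" referred to in the introduction.

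Finally the three meshes are unified in a single estimate of the schematic form
\[
\enorm{\bm\xi}\le C\sqrt\varepsilon\bigl(N^{-1}\max|\psi^{\prime}|\bigr)^{k+1}+CN^{-(k+1/2)}+C|\Omega^f|^{1/2}N^{-(k+1)}.
\]
Substituting $\max|\psi^{\prime}|$ from Table \ref{Table:mesh:functions}, together with $|\Omega^f|=\tau\le C\varepsilon\ln N$ on the S- and BS-meshes, produces the first two lines of \eqref{main:result}. On the B-mesh $\tau$ may exceed $C\varepsilon\ln N$ by an unbounded factor, so here I would invoke the dedicated bounds \eqref{mesh:property:1}--\eqref{mesh:property:2} from Lemma \ref{Lemma:mesh:size} (following \cite{Cheng2020}) to replace every naïve summation over $\Omega^f$ by its sharper logarithmic counterpart, recovering the BS-mesh rate. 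Combining via $\enorm{\bm e}\le\enorm{\bm\eta}+\enorm{\bm\xi}$, with $\enorm{\bm\eta}$ supplied directly by Lemma \ref{Lemma:approximation:property}, yields \eqref{main:result}.
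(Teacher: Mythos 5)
Your architecture is the paper's: Galerkin orthogonality, the test function $(\xi_u,-\xi_q+a\xi_p,\xi_p)$ realising the energy norm, Gauss--Radau orthogonality killing the volume and interior-flux terms, the element-average trick for $a$ and $b$, and a structural use of the second scheme equation to control the $\xi_q$-coupling. However, the key quantitative step is wrong. First, the mechanism is mis-aimed: the term that actually needs the detour is $(\eta_p,-\xi_q)$, where $\xi_q$ sits in the \emph{test} slot, so what you need is a local bound on $\norm{\xi_q}_{I_j}$ itself. That is obtained by noting that $(\xi_p,\xi_q)$ satisfies the local relation \eqref{Variation:form:p:q} with source $F_j(s)=\dual{\eta_q}{s}_{I_j}$ and testing it with $s=\xi_q$ (the paper's Appendix lemma), giving $\norm{\xi_q}_{I_j}\le C\varepsilon(h_j^{-1}\norm{\xi_p}_{I_j}+h_j^{-1/2}|\jump{\xi_p}_j|)+\norm{\eta_q}_{I_j}$. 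Rewriting $\dual{\eta_q}{s}_{I_j}$, as you propose, acts on the wrong factor and moreover is not a valid identity as stated, since $(\pi^+p,\pi^+q)$ does not satisfy the discrete relation \eqref{LDG form q}.

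Second, and more seriously, your taming of the resulting inverse-inequality factor rests on the claim $\sqrt{\varepsilon}/h_j=O(1)$, which is false: Lemma \ref{Lemma:mesh:size} only gives $h_j\ge C\varepsilon N^{-1}$, so $\sqrt{\varepsilon}/h_j$ can be as large as $N/(C\sqrt{\varepsilon})$; even the correct combination $\varepsilon/h_j$ is only $O(N/\min\varphi^{\prime})$ on the fine region (e.g. $O(N/\ln N)$ for the S-mesh). Carrying this factor through Young's inequality produces the term $\sum_j(1+\varepsilon/h_j)^2\norm{\eta_p}^2_{I_j}$, whose mesh-by-mesh evaluation is exactly what generates the leading terms $\sqrt{\varepsilon}(N^{-1}\ln N)^{k}$ (S-mesh) and $\sqrt{\varepsilon}N^{-k}(\ln N)^{1/2}$ (BS- and B-mesh, the latter requiring \eqref{mesh:property:2}) in \eqref{main:result}. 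Your schematic final bound $\sqrt{\varepsilon}(N^{-1}\max|\psi^{\prime}|)^{k+1}$ is one power better than what the method delivers and is not justified by your argument; the loss of that power is precisely the point of the $S_1$ estimate and the reason the theorem is stated with exponent $k$ rather than $k+1$ in the $\sqrt{\varepsilon}$-term.
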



\begin{proof}
Recalling $\bm\eta = \bm w-\bm\Pi\bm w$ and $\bm\xi = \wph-\bm\Pi\bm w$.
Owing to Galerkin orthogonality $B(\bm w - \wph ;\vph)=0 \quad \forall \vph \in \mathcal {V}_N^3$,
we have the following error equation:
\begin{equation}\label{error_equation}
B(\bm{\xi ;\vph})=B(\bm{\eta ;\vph})
\hspace{2em}\forall \vph=(v,r,s) \in \mathcal {V}_N^3.
\end{equation}
Taking $\vph=(\xi_u,-\xi_q+a\xi_p,\xi_p)$ in
\eqref{error_equation}, one has
\begin{equation}\label{expansion}
B(\bm{\eta ;\vph}) := \sum_{i=1}^{12} {S_i},
\end{equation}
where
\begin{align}
S_1:=&(\eta_p,-\xi_q + a\xi_p);
\hspace{0.5em}
S_2:=(\eta_u,-\xi^{\prime}_q+{(a\xi_p)}^\prime);
\hspace{0.5em}
S_3:=\sum_{j=1}^{N-1}(\eta_u)_j^{-} \jump{-\xi_q+a\xi_p}_j;
\nonumber\\
S_4:=&(\eta_q,\xi_p);
\hspace{3em}
S_5:=\varepsilon(\eta_p,\xi^{\prime}_p);
\hspace{2em}
S_6:=\varepsilon \sum_{j=1}^{N-1}(\eta_p)_j^{+}
\jump{\xi_p}_j+\varepsilon(\eta_p)_0^{+}(\xi_p)_0^{+};
\nonumber\\
S_7:=&-(\eta_q,\xi^{\prime}_u);
\hspace{2em}
S_8:=- \sum_{j=1}^{N-1}(\eta_q)_j^{+}\jump{\xi_u}_j
+(\eta_q)_N^{-}(\xi_u)_N^{-}-(\eta_q)_0^{+}(\xi_u)_0^{+} ;
\nonumber\\
S_9:=&(a\eta_p,\xi^{\prime}_u);
\hspace{2em}
S_{10}:=\sum_{j=1}^{N-1}a_j(\eta_p)_j^{+}
\jump{\xi_u}_j-a_N(\eta _p)_N^{-}(\xi_u)_N^{-}+a_0(\eta_p)_0^{+}(\xi_u)_0^{+} ;
\nonumber\\
S_{11}:=&((c-b^{\prime})\eta_u,\xi_u)-(b\eta_u,\xi^{\prime}_u);\nonumber\\
S_{12}:=&- \sum_{j=1}^{N-1}\left(\frac{b_j+|b_j|}{2}(\eta_u)_j^{-}
+\frac{b_j-|b_j|}{2}(\eta_u)_j^{+}\right)\jump{\xi_u}_j
\nonumber\\
&
+\frac{b_N+|b_N|}{2}(\eta_u)_N^{-}(\xi_u)_N^{-}
-\frac{b_0-|b_0|}{2}(\eta _u)_0^{+}(\xi_u)_0^{+} . \nonumber
\end{align}
From the orthogonality of the approximating polynomials
and the exact collocation of the Gauss-Radau projection,
it is easy to find that
\begin{equation}\label{S2=0}
S_3=S_5=S_6=S_7= 0.
\end{equation}

Using the Cauchy-Schwarz inequality,  
inverse inequality,
the definition \eqref{projection} of the Gauss-Radau projection
and $a\geq \alpha>0$, we obtain
\begin{align}
\label{S2:estimate}
|S_2|&
=|(\eta_u,(a-\bar a)\xi^{\prime}_p)+(\eta_u,a^{\prime}\xi_p)|
\nonumber\\
&\le \sum_{j=1}^N \left(\norm{\eta_u}_{I_j}
\cdot h_j\norm {a^{\prime}}_{{L^\infty}(I_j)} \cdot Ch_j^{-1}\norm{\xi_p}_{I_j}\right)
+ C\norm{\eta_u}\norm{a^{1/2}\xi_p}
\nonumber\\
&\le C\norm{\eta_u}\norm{a^{1/2}\xi_p}
\le C\norm{\eta_u}\enorm{\bm\xi}
\le C\norm{\eta_u}^2 +\frac{1}{28}\enorm{\bm\xi}^2,
\end{align}
where $\bar a $ is a piecewise constant function defined by $\bar a = \frac{1}{h_j}\int_{I_j} a \rm dx $ on each element $I_j$.

Analogously, one has
\begin{align}\label{S4:estimate}
|S_4| & \le \norm{\eta_q}\norm{\xi_p}
\le C\norm{\eta_q} ^2
+ \frac{1}{28}\enorm{\bm\xi}^2,
\\
|S_8| & =|(\eta_q )_N^{-}(\xi_u)_N^{-}|
\le C\norm{\eta_q}_{{L^\infty}(\Omega ^f)}^2
+ \frac{1}{28}\enorm{\bm\xi}^2,
\\
|S_9| &=|((a-\bar a)\eta_p,\xi^{\prime}_u)|
\le
C\norm{\eta_p}\enorm{\bm\xi}\le C\norm{\eta_p}^2
+\frac{1}{28} \enorm{\bm\xi}^2,
\\
\label{S10 estimate}
|S_{10}|&=|-a_N(\eta_p)_N^{-}(\xi_u)_N^{-}|
\le C\norm{\eta_p}_{{L^\infty}(\Omega ^f)}^2
+\frac{1}{28}\enorm{\bm\xi}^2,
\\
|S_{11}| & =|((c-b^{\prime})\eta_u,\xi _u)
+((\bar b-b)\eta_u,\xi^{\prime}_u)|
\nonumber\\
&\le C\left\| {{\eta _u}} \right\| \interleave \bm{\xi} \interleave_E
\le C\norm{\eta_u}^2
+\frac1{28}\enorm{\bm\xi}^2.
\end{align}
To bound $S_{12}$, we note that
$(\eta_u)_j^{-}=0$ for $j=1,2,\dots,N$,
$(\eta_u)_j^{+}=\jump{\eta_u}_j$ for $j=0,1,\dots,N-1$ and
\[
\left|\frac{b_j-|b_j|}{2}(\eta_u)_j^{+}\jump{\xi_u}_j\right|
\leq |\jump{\eta_u}_j|(|b_j||\jump{\xi_u}_j|).
\]
Using the Cauchy-Schwarz inequality, we obtain
\begin{align}
\label{S11:estimate}
|S_{12}| &=
\left|- \sum_{j=0}^{N-1}\frac{b_j-|b_j|}{2}(\eta_u)_j^{+} \jump{\xi_u}_j\right|
\le \sqrt 2\left(\sum_{j=0}^{N-1}\jump{\eta_u}_j^2\right)^{1/2}
\left(\sum_{j=0}^{N-1}\frac12|b_j|\jump{\xi_u}_j^2\right)^{1/2}
\nonumber\\
&\le C\sum_{j=0}^{N}\jump{\eta_u}_j^2+\frac{1}{28}\enorm{\bm\xi}^2.
\end{align}
Finally, we bound $S_1$.
The main challenge is to bound $\norm{\xi_q}$ which is not included in
	the energy norm $\enorm{\bm\xi}$.
	We intend to seek its control by $\norm{\xi_p}$ and 
	$\left(\frac{\varepsilon}{2}\sum_{j=1}^N\jump{\xi_p}_j^2\right)^{1/2}$.
	This depends on the inherent structure of the LDG scheme,
	as we shall describe.
	Taking $v=r=0$ in \eqref{error_equation} and restricting the test function $s$ to the local element $I_j$, we have that
	\begin{equation*}
		\dual{\xi_q}{s}_{I_j}
		+\varepsilon\Big(\dual{\xi_p}{s^{\prime}}_{I_j}
		-(\hat{\xi}_p)_j s_j^{-}+(\hat{\xi}_p)_{j-1}s_{j-1}^{+}\Big)=\dual{\eta_q}{s}_{I_j}
	\end{equation*}
	in each element $I_j$ for any function $s\in \mathcal{P}^k(I_j)$,
	where we use the property of $\eta_p=p-\pi^+ p$.
	Thus, $(\xi_p,\xi_q)\in \mathcal{V}^2_N$ satisfies \eqref{Variation:form:p:q}, with $F_j(s)=(\eta_q,s)_{I_j}$.
	From \eqref{relationship:p:q}, we have
\begin{align}\label{estimate xiq}
	\norm{\xi_q}_{I_j}
	\le C\varepsilon\left(h_j^{-1}\norm{\xi_p}_{I_j}
	+h_j^{-1/2}|\jump{\xi_p}_j|\right)+\norm{\eta_q}_{I_j}
\end{align}
for $j=1,2,\dots,N$.
Using the Cauchy-Schwarz and Young's inequalities, yields
\begin{align*}
	|(\eta_p,-\xi_q)| &
	\le \sum_{j=1}^N\norm{\eta_p}_{I_j}\norm{\xi_q}_{I_j}
	\nonumber\\
	&\le C\sum_{j=1}^N\norm{\eta_p}_{I_j}
	\left(\varepsilon h_j^{-1}\norm{\xi_p}_{I_j}
	+\varepsilon h_j^{-1/2}|\jump{\xi_p}_j|+\norm{\eta_q}_{I_j}\right)
	\nonumber \\
	&\le C\sum_{j=1}^N
	\left(1+\frac{\varepsilon}{h_j}\right)^2\norm{\eta_p}^2_{I_j}
	+ \frac{1}{8}\norm{a^{1/2}\xi _p}^2
	+ \frac{1}{4}\cdot \frac{\varepsilon}{2}
	\sum_{j=1}^N\jump{\xi_p}_j^2+C\norm{\eta_q}^2.
\end{align*}
So, one gets
\begin{equation}
	\begin{split}
		|S_1|\le& 
		|\dual{\eta_p}{-\xi_q}|+C\norm{\eta_p}^2+\frac18\norm{a^{1/2}\xi_p}^2
		\le C\sum_{j=1}^N
		\left(1+\frac{\varepsilon}{h_j}\right)^2\norm{\eta_p}^2_{I_j}
		+C\norm{\eta_q}^2
		+\frac{1}{4}\enorm{\bm\xi}^2.
	\end{split}
\end{equation}
Collecting the above estimates, 
we get
\begin{align}\label{xi:last:inequality}
\enorm{\bm\xi}^2\le 
&C\left[\sum_{j=1}^N
\left(1+\frac{\varepsilon}{h_j}\right)^2\norm{\eta_p}^2_{I_j}
+ \norm{\eta_u} ^2 
+ \norm{\eta_q} ^2 
+ \sum_{j=0}^{N}\jump{\eta_u}_j^2
+\norm{\eta_p}_{{L^\infty}(\Omega ^f)}^2
+\norm{\eta_q}_{{L^\infty}(\Omega ^f)}^2
\right].
\end{align}
In the sequel, we shall estimate the first term on the right hand side of \eqref{xi:last:inequality} for each type of layer-adapted meshes.
For S-mesh,
one uses \eqref{eta:p:smooth:region} and \eqref{eta:p:layer:region} to get that
\begin{align*}
\sum_{j=1}^N\left(1+\frac{\varepsilon}{h_j}\right)^2\norm{\eta_p}^2_{I_j}
\leq &\; C(1+\varepsilon N)^2\norm{\eta_p}^2_{\Omega^c}
+C(1+N (\ln N)^{-1})^2\norm{\eta_p}^2_{\Omega^f}
\\
\leq &\; C (1+\varepsilon N)^2 N^{-2(k+1)}
+C\left(N (\ln N)^{-1}\right)^2\left[\varepsilon(N^{-1}\ln N)^{2(k+1)}
+\varepsilon N^{-2(k+1)}\ln N\right]
\\
\leq &\; C \varepsilon (N^{-1}\ln N)^{2k}+ CN^{-2(k+1)}.
\end{align*}
For BS-mesh, using Lemma \ref{Lemma:mesh:size}, \eqref{eta:p:smooth:region} and \eqref{eta:p:layer:region}, we have
\begin{align*}
\sum_{j=1}^N\left(1+\frac{\varepsilon}{h_j}\right)^2\norm{\eta_p}^2_{I_j}
\leq &\; C(1+\varepsilon N)^2\norm{\eta_p}^2_{\Omega^c}
+C N^2\norm{\eta_p}^2_{\Omega^f}
\leq  C \varepsilon N^{-2k}\ln N+ CN^{-2(k+1)}.
\end{align*}
For B-mesh, we obtain from Lemma \ref{Lemma:mesh:size}, \eqref{eta:p:smooth:region} and \eqref{local norm p} that
\begin{align*}
\sum_{j=1}^N\left(1+\frac{\varepsilon}{h_j}\right)^2\norm{\eta_p}^2_{I_j}
\leq &\; C(1+\varepsilon N)^2\norm{\eta_p}^2_{\Omega^c}
+\sum_{j=N/2+2}^N
\left(1+\frac{\varepsilon}{h_j}\right)^2\norm{\eta_p}^2_{I_j}
+\left(1+\frac{\varepsilon}{h_{N/2+1}}\right)^2\norm{\eta_p}^2_{I_{N/2+1}}
\\
\leq &\; C(1+\varepsilon N)^2\norm{\eta_p}^2_{\Omega^c}
+C N^2 \left(\sum_{j=N/2+2}^N h_j\right) \norm{\eta_p}^2_{L^{\infty}(\Omega^f)}
+C \norm{\eta_p}^2
\\
\leq &\; C \varepsilon N^{-2k}\ln N+ CN^{-2(k+1)}.
\end{align*}
Inserting the above estimates into \eqref{xi:last:inequality}
and using Lemma \ref{Lemma:approximation:property},
we have
	\begin{align}\label{xi2}
	\enorm{\bm\xi}^2\le 
	\begin{cases}
	C\left(\varepsilon(N^{-1}\ln N)^{2k}+(N^{-1}\ln N)^{2(k+1)}+N^{-(2k+1)}\right)
    &\text{for S-mesh,}\\
    C\left(\varepsilon N^{-2k}\ln N+N^{-(2k+1)}\right)
    &\text{for BS-mesh and B-mesh.}
    \\
	\end{cases}
	\end{align}
Theorem \ref{thm:error:estimate} follows from (\ref{xi2}), Lemma \ref{Lemma:approximation:property} and triangle inequality.
\end{proof}


\begin{remark}
	If $\varepsilon\le N^{-1}$,
	error estimate \eqref{main:result}
	is optimal up to a logarithmic factor
	and uniform with respect to the small parameter $\varepsilon$.
	However, the convergence rate of the $L^2$-error
	$\norm{u-\uph}$ and $\norm{p-\pph}$,
	implied by \eqref{main:result}
	appear to be inferior to the numerical results.	
\end{remark}

\begin{remark}\label{rmk:thm}
If we employ Gauss-Labotto projection for $p$ and $q$ in the last element $I_N$, we don't need to deal with the terms $S_8$ and $S_{10}$.
That means, for $S$-mesh, we have
$\enorm{\bm w-\wph} \leq C\left(\sqrt{\varepsilon}(N^{-1}\ln N)^{k}+N^{-(k+1/2)}\right)$.
Therefore the final error estimate is of form
$O(\sqrt{\varepsilon N} (N^{-1}\max|\psi^{\prime}|)^{k+1/2}+N^{-(k+1/2)})$. 
\end{remark}

\section{Numerical experiments}
\label{Sec:experiments}

In this section, we present numerical results 
to confirm Theorem \ref{thm:error:estimate}.
We consider problem \eqref{SPP:3RD} with $a=b=c=1$.
Assume that $f$ is suitably chosen such that the exact solution is
\begin{align}\label{exact solution}
u(x)=&-\varepsilon e^{- 1/\varepsilon}
+(1-2\varepsilon+2\varepsilon e^{-1/\varepsilon})\sin(\pi x/2)
+\varepsilon e^{-(1-x)/\varepsilon}
\nonumber\\
&+x(1-x)+(\varepsilon-\varepsilon e^{-1/\varepsilon}-1)\sin^2(\pi x/2).
\end{align}

Figures \ref{fig:energy:u}–\ref{fig:energy:p} show $\uph$ and $\pph$
computed by the LDG method on Shishkin mesh, where $k=1$, $N=64$ and $\varepsilon=10^{-2},10^{-4}$.

The LDG method with piecewise polynomials of degree $k = 0,1,2,3$
is carried out on the three-layer-adapted meshes listed
in Table \ref{Table:mesh:functions}, where $\sigma=k+1.5$.
We calculated the convergence rates using the following formulae:
\[
r_2=\frac{\log{e^N}-\log{e^{2N}}}{\log 2},
\quad
r_s=\frac{\log{e^N}-\log{e^{2N}}}{\log(2\ln N/\ln 2N)},
\]
where $e^N$ denotes the error in the $N$-element,
$r_2$ is the convergence rate of the BS-mesh and B-mesh, and
$r_s$ is the convergence rate of the S-mesh with respect to the power $\ln N$.

In Table \ref{Table:energy:error:-4}, we list the energy-error as well as the convergence rate
for  $\varepsilon=10^{-4}$.
One sees that the energy-error converges at a rate of $O((N^{-1}\max|\psi^{\prime}|)^{k+1/2})$.
In Tables \ref{Table:energy:error:-8}-\ref{Table:energy:error:-12}, 
we compute energy-error for  $\varepsilon=10^{-8},10^{-12}$
and find that the data for three types of layer-adapted meshes
is almost the same, the convergence rate is  $O(N^{-(k+1/2)})$.
In Figure \ref{fig:error:energy}, we display 
the plots of the convergence rate for $\varepsilon=10^{-4},10^{-8}$.
These numerical results imply 
that the energy error converges at a rate of 
$O(\varepsilon^{\kappa} (N^{-1}\max|\psi^{\prime}|)^{k+1/2}+N^{-(k+1/2)})$
for some constant $\kappa>0$, which is slightly better
than the predictions in Theorem \ref{thm:error:estimate}
and Remark \ref{rmk:thm}.

Lastly, we point out that the $L^2$-error $\norm{u-\uph}$ and $\norm{p-\pph}$ converges at a rate of $O(\varepsilon^{\kappa} (N^{-1}\max|\psi^{\prime}|)^{k+1}+N^{-(k+1)})$, see Figures \ref{fig:error:u}--\ref{fig:error:p}.

\begin{figure}[htp]
	\centering
	\includegraphics[width=1.8in,height=1.8in]{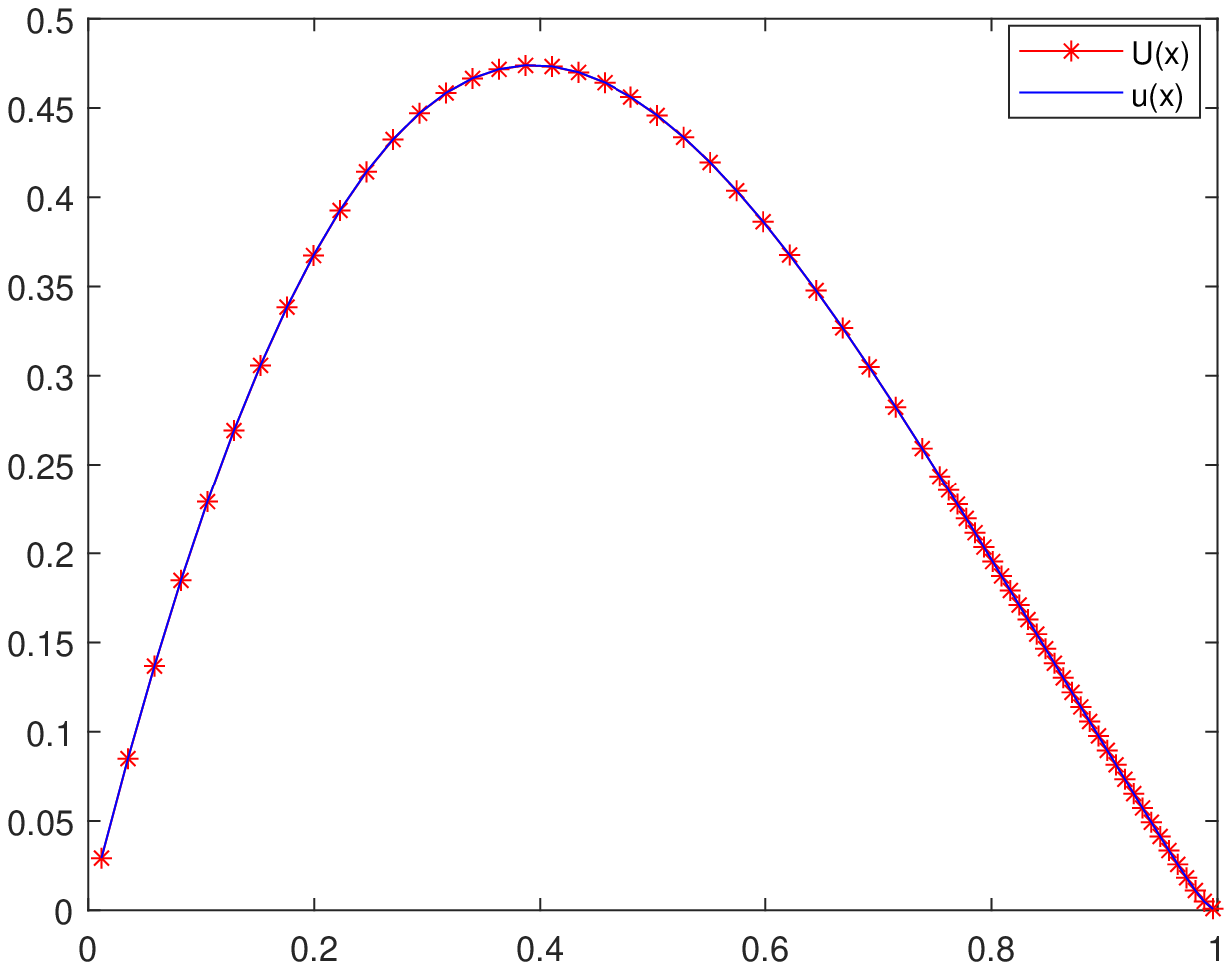}\quad
	\includegraphics[width=1.8in,height=1.8in]{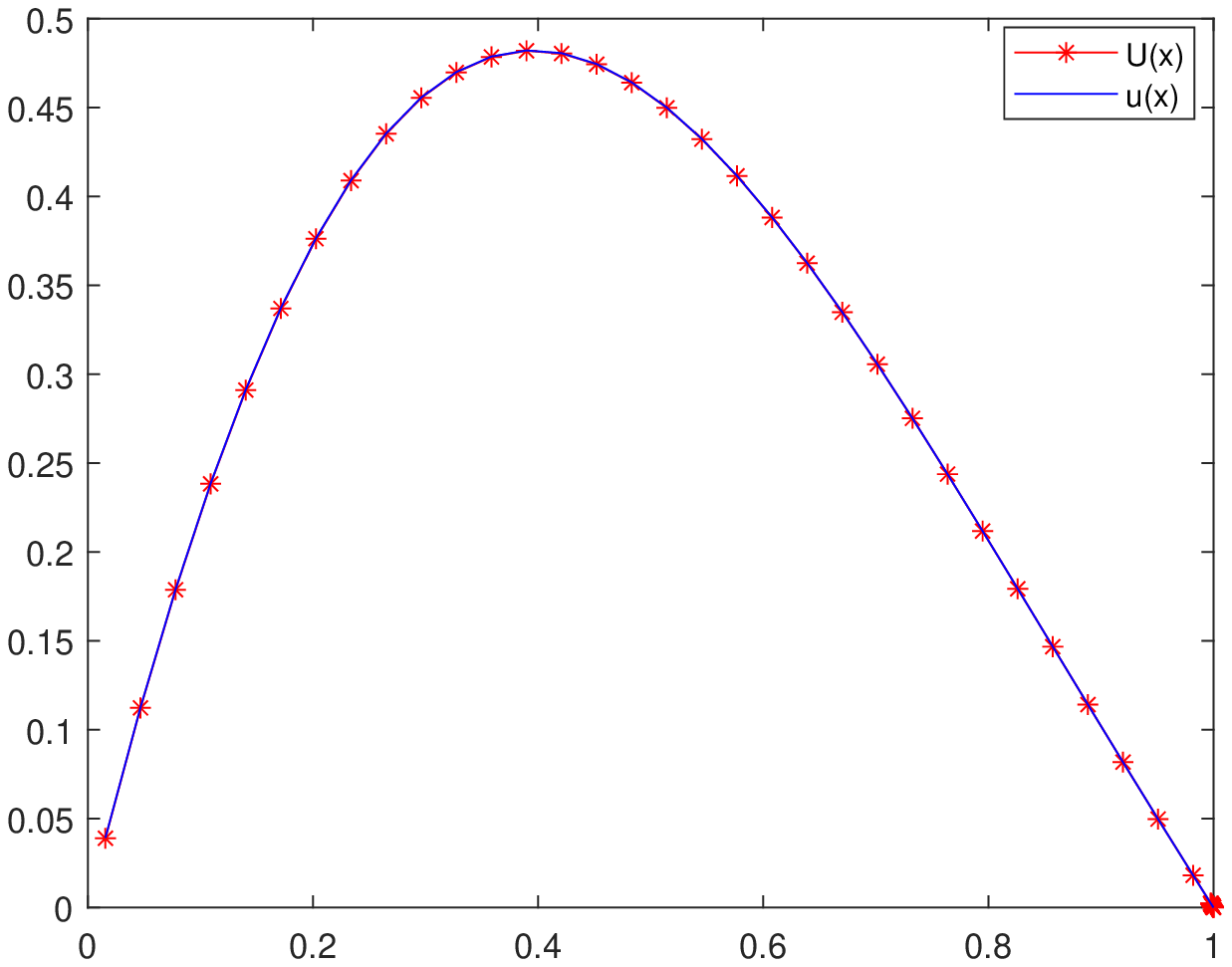}
	\caption{\small
		Numerical approximation of $u$: $\varepsilon=10^{-2}$ (left);
		$\varepsilon=10^{-4}$ (right).
	}
	\label{fig:energy:u}
\end{figure}
\begin{figure}[htp]
	\centering
	\includegraphics[width=1.8in,height=1.8in]{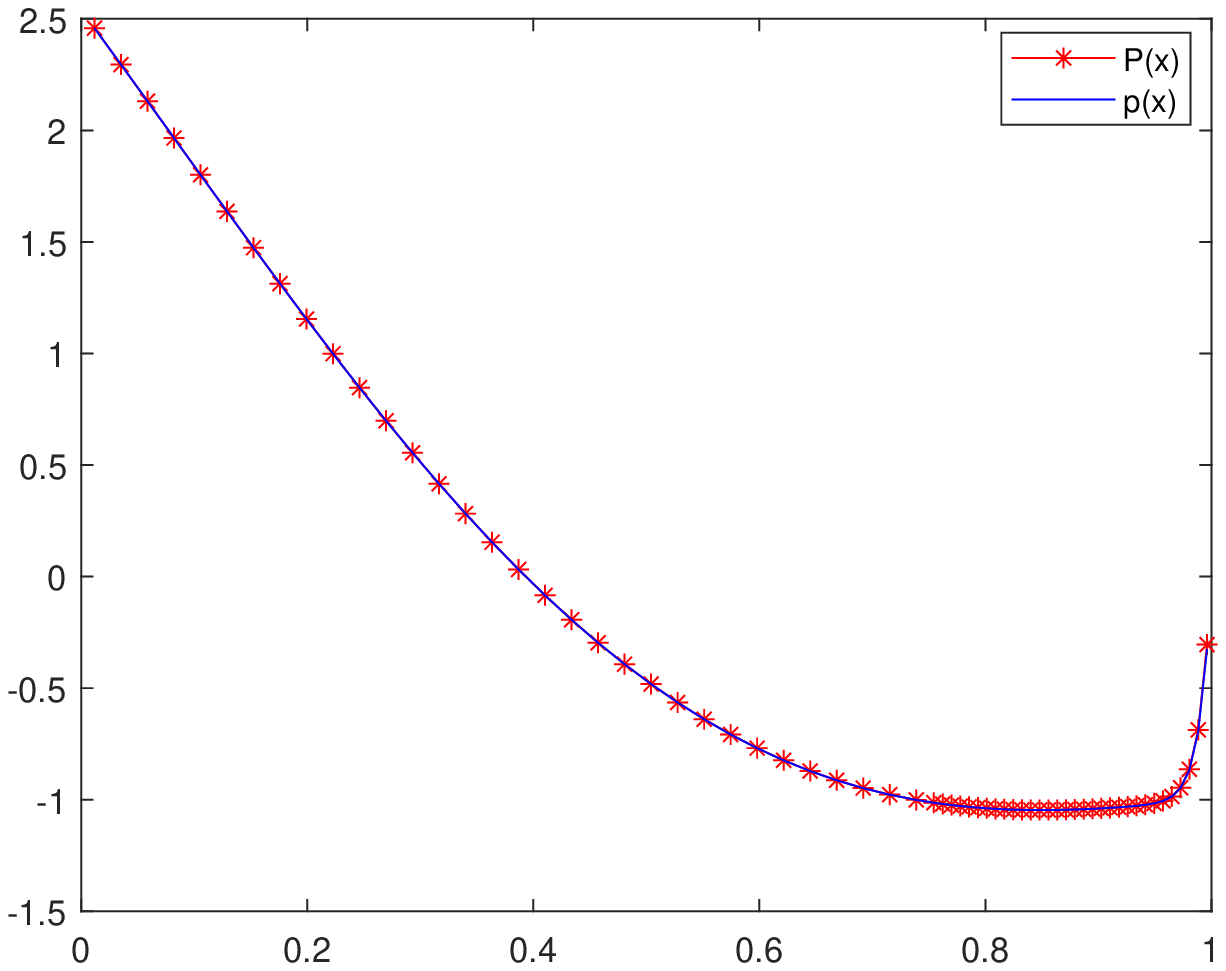}\quad
	\includegraphics[width=1.8in,height=1.8in]{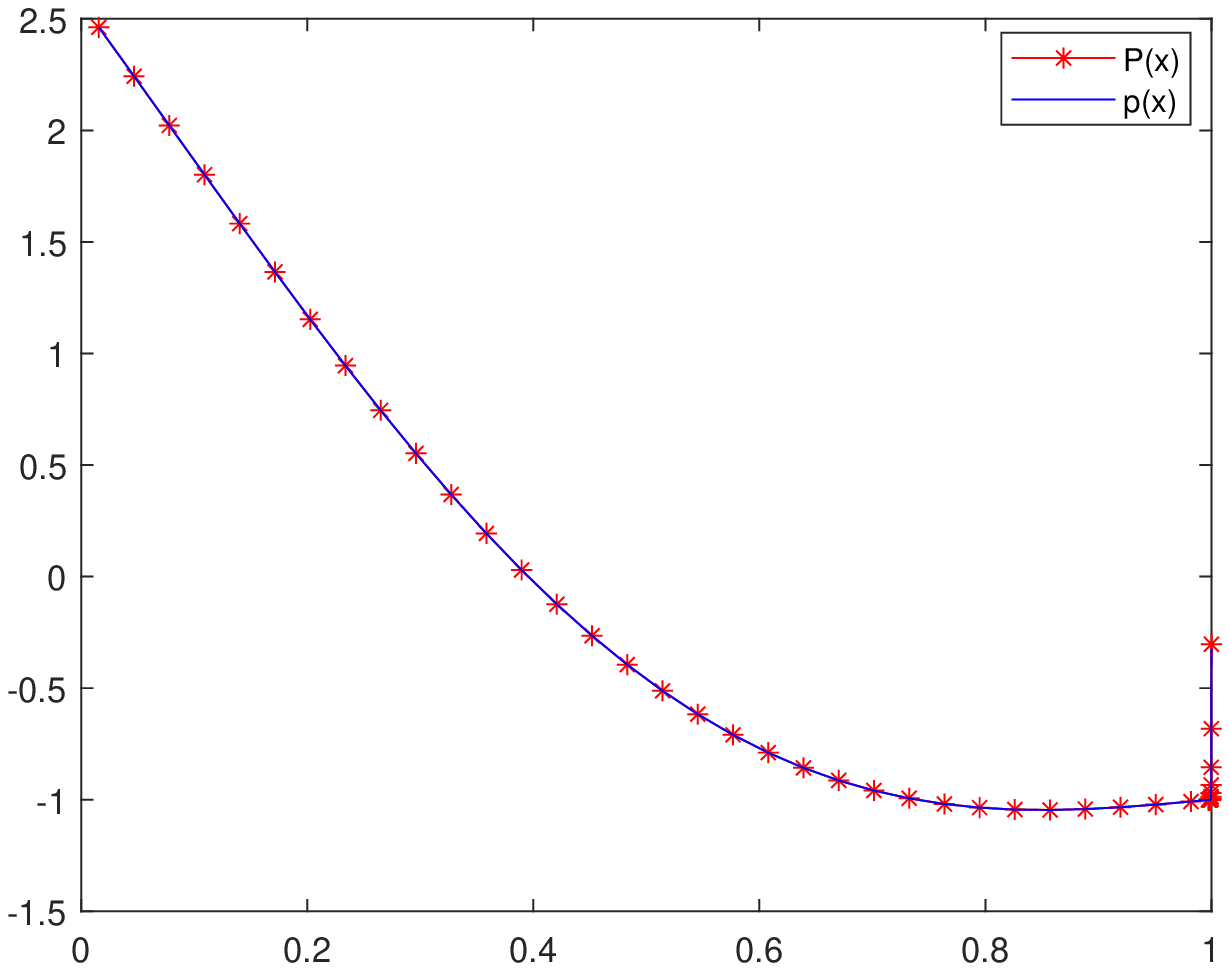}
	\caption{\small
		Numerical approximation of $p$:  $\varepsilon=10^{-2}$ (left);
		$\varepsilon=10^{-4}$ (right).
	}
	\label{fig:energy:p}
\end{figure}

\begin{table}[!ht]
	\centering
	\caption{ Energy-error and convergence rate for $\varepsilon=10^{ -4}$.}
	\label{Table:energy:error:-4}
	\begin{tabular}{ccccccccc}
				\hline
		&&\multicolumn{2}{c}{S-mesh}&\multicolumn{2}{c}{BS-mesh}&\multicolumn{2}{c}{B-mesh}\\ 
		\hline
		&$N$&error&$r_s$-order&error&$r_2$-order&error&$r_2$-order\\
		\hline	
		${{\cal P}^0}$&16&3.87e-01&-&3.87e-01&-&3.87e-01&-\\
		&32&2.40e-01&1.02&2.40e-01&0.69&2.40e-01&0.69\\
	    &64&1.56e-01&0.84&1.56e-01&0.62&1.56e-01&0.62\\
		&128&1.05e-01&0.73&1.05e-01&0.57&1.05e-01&0.57\\
		&256&7.21e-02&0.67&7.21e-02&0.54&7.20e-02&0.54\\
		&512&5.02e-02&0.63&5.02e-02&0.52&5.02e-02&0.52\\
		${{\cal P}^1}$&16&2.57e-02&-&2.57e-02&-&2.56e-02&-\\
		&32&8.71e-03&2.30&8.71e-03&1.56&8.69e-03&1.56\\
		&64&3.01e-03&2.08&3.01e-03&1.53&3.00e-03&1.53\\
		&128&1.05e-03&1.95&1.05e-03&1.52&1.05e-03&1.51\\
		&256&3.70e-04&1.86&3.69e-04&1.51&3.68e-04&1.51\\
		&512&1.30e-04&1.82&1.30e-04&1.51&1.30e-04&1.50\\
		${{\cal P}^2}$&16&6.62e-04&-&6.51e-04&-&6.47e-04&-\\
		&32&1.15e-04&3.72&1.08e-04&2.59&1.08e-04&2.58\\
		&64&2.17e-05&3.26&1.85e-05&2.55&1.84e-05&2.55\\
		&128&4.39e-06&2.96&3.20e-06&2.53&3.19e-06&2.53\\
		&256&9.31e-07&2.77&5.60e-07&2.51&5.58e-07&2.52\\
		&512&2.02e-07&2.66&9.85e-08&2.51&9.82e-08&2.51\\
		${{\cal P}^3}$&16&3.16e-05&-&2.06e-05&-&2.04e-05&-\\
		&32&5.37e-06&3.77&1.75e-06&3.56&1.73e-06&3.56\\
		&64&8.99e-07&3.50&1.52e-07&3.53&1.50e-07&3.53\\
		&128&1.37e-07&3.49&1.33e-08&3.51&1.31e-08&3.52\\
		&256&1.94e-08&3.49&1.16e-09&3.52&1.16e-09&3.50\\
		&512&2.60e-09&3.49&1.02e-10&3.51&1.02e-10&3.51\\ 
		\hline
	\end{tabular}
\end{table}

\begin{table}[!ht]
	\centering
	\caption{ Energy-error and convergence rate for $\varepsilon=10^{-8}$.}
	\label{Table:energy:error:-8}
	\begin{tabular}{ccccccccc}
				\hline
		&&\multicolumn{2}{c}{S-mesh}&\multicolumn{2}{c}{BS-mesh}&\multicolumn{2}{c}{B-mesh}\\ 
		\hline
		&$N$&error&$r_2$-order&error&$r_2$-order&error&$r_2$-order\\
		\hline
		${{\cal P}^0}$&16&3.87e-01&-&3.87e-01&-&3.87e-01&-\\
		&32&2.40e-01&0.69&2.40e-01&0.69&2.40e-01&0.69\\
		&64&1.56e-01&0.62&1.56e-01&0.62&1.56e-01&0.62\\
		&128&1.05e-01&0.57&1.05e-01&0.57&1.05e-01&0.57\\
		&256&7.21e-02&0.54&7.21e-02&0.54&7.20e-02&0.54\\
		&512&5.02e-02&0.52&5.02e-02&0.52&5.02e-02&0.52\\
		${{\cal P}^1}$&16&2.57e-02&-&2.57e-02&-&2.57e-02&-\\
		&32&8.72e-03&1.56&8.72e-03&1.56&8.72e-03&1.56\\
		&64&3.01e-03&1.53&3.01e-03&1.53&3.01e-03&1.53\\
		&128&1.05e-03&1.52&1.05e-03&1.52&1.05e-03&1.52\\
		&256&3.69e-04&1.51&3.69e-04&1.51&3.69e-04&1.51\\
		&512&1.30e-04&1.51&1.30e-04&1.51&1.30e-04&1.51\\	
		${{\cal P}^2}$&16&6.52e-04&-&6.52e-04&-&6.52e-04&-\\
		&32&1.09e-04&2.58&1.09e-04&2.58&1.08e-04&2.59\\
		&64&1.85e-05&2.56&1.85e-05&2.56&1.85e-05&2.55\\
		&128&3.21e-06&2.53&3.21e-06&2.53&3.21e-06&2.53\\
		&256&5.62e-07&2.51&5.62e-07&2.51&5.62e-07&2.51\\
		&512&9.89e-08&2.51&9.89e-08&2.51&9.89e-08&2.51\\
		${{\cal P}^3}$&16&2.06e-05&-&2.06e-05&-&2.06e-05&-\\
		&32&1.76e-06&3.55&1.76e-06&3.55&1.76e-06&3.55\\
		&64&1.53e-07&3.52&1.52e-07&3.53&1.52e-07&3.53\\
		&128&1.34e-08&3.51&1.33e-08&3.51&1.33e-08&3.51\\
		&256&1.19e-09&3.49&1.17e-09&3.51&1.17e-09&3.51\\
		&512&1.07e-10&3.48&1.03e-10&3.51&1.03e-10&3.51\\ 
		\hline
	\end{tabular}
\end{table}

\begin{table}[!ht]
	\centering
	\caption{ Energy-error and convergence rate for $\varepsilon=10^{-12}$.}
	\label{Table:energy:error:-12}
	\begin{tabular}{ccccccccc}
				\hline
		&&\multicolumn{2}{c}{S-mesh}&\multicolumn{2}{c}{BS-mesh}&\multicolumn{2}{c}{B-mesh}\\ 
		\hline
		&$N$&error&$r_2$-order&error&$r_2$-order&error&$r_2$-order\\
		\hline
		${{\cal P}^0}$&16&3.87e-01&-&3.87e-01&-&3.87e-01&-\\
		&32&2.40e-01&0.69&2.40e-01&0.69&2.40e-01&0.69\\
		&64&1.56e-01&0.62&1.56e-01&0.62&1.56e-01&0.62\\
		&128&1.05e-01&0.57&1.05e-01&0.57&1.05e-01&0.57\\
		&256&7.21e-02&0.54&7.21e-02&0.54&7.20e-02&0.54\\
		&512&5.02e-02&0.52&5.02e-02&0.52&5.02e-02&0.52\\
		${{\cal P}^1}$&16&2.57e-02&-&2.57e-02&-&2.57e-02&-\\
		&32&8.72e-03&1.56&8.72e-03&1.56&8.72e-03&1.56\\
		&64&3.01e-03&1.53&3.01e-03&1.53&3.01e-03&1.53\\
		&128&1.05e-03&1.52&1.05e-03&1.52&1.05e-03&1.52\\
		&256&3.69e-04&1.51&3.69e-04&1.51&3.69e-04&1.51\\
		&512&1.30e-04&1.51&1.30e-04&1.51&1.30e-04&1.51\\	
		${{\cal P}^2}$&16&6.52e-04&-&6.52e-04&-&6.52e-04&-\\
		&32&1.09e-04&2.58&1.09e-04&2.58&1.08e-04&2.59\\
		&64&1.85e-05&2.56&1.85e-05&2.56&1.85e-05&2.55\\
		&128&3.21e-06&2.53&3.21e-06&2.53&3.21e-06&2.53\\
		&256&5.62e-07&2.51&5.62e-07&2.51&5.62e-07&2.51\\
		&512&9.89e-08&2.51&9.89e-08&2.51&9.89e-08&2.51\\
		${{\cal P}^3}$&16&2.06e-05&-&2.06e-05&-&2.06e-05&-\\
		&32&1.76e-06&3.55&1.76e-06&3.55&1.76e-06&3.55\\
		&64&1.53e-07&3.52&1.52e-07&3.53&1.52e-07&3.53\\
		&128&1.34e-08&3.51&1.33e-08&3.51&1.33e-08&3.51\\
		&256&1.19e-09&3.49&1.17e-09&3.51&1.17e-09&3.51\\
		&512&1.07e-10&3.48&1.03e-10&3.51&1.03e-10&3.51\\ 
		\hline
	\end{tabular}
\end{table}

\begin{figure}[htp]
	\centering
	\includegraphics[width=1.8in,height=1.8in]{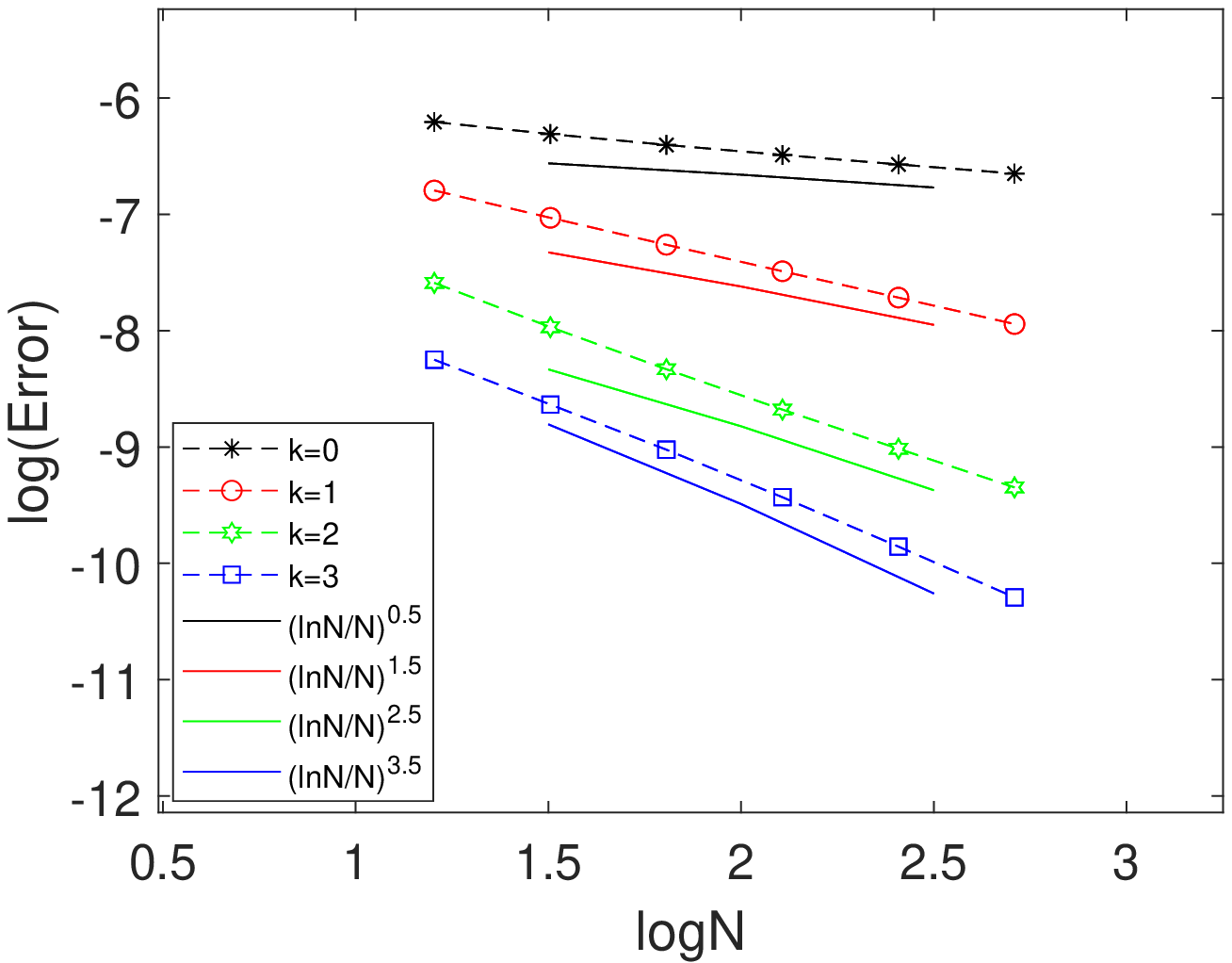}\quad
	\includegraphics[width=1.8in,height=1.8in]{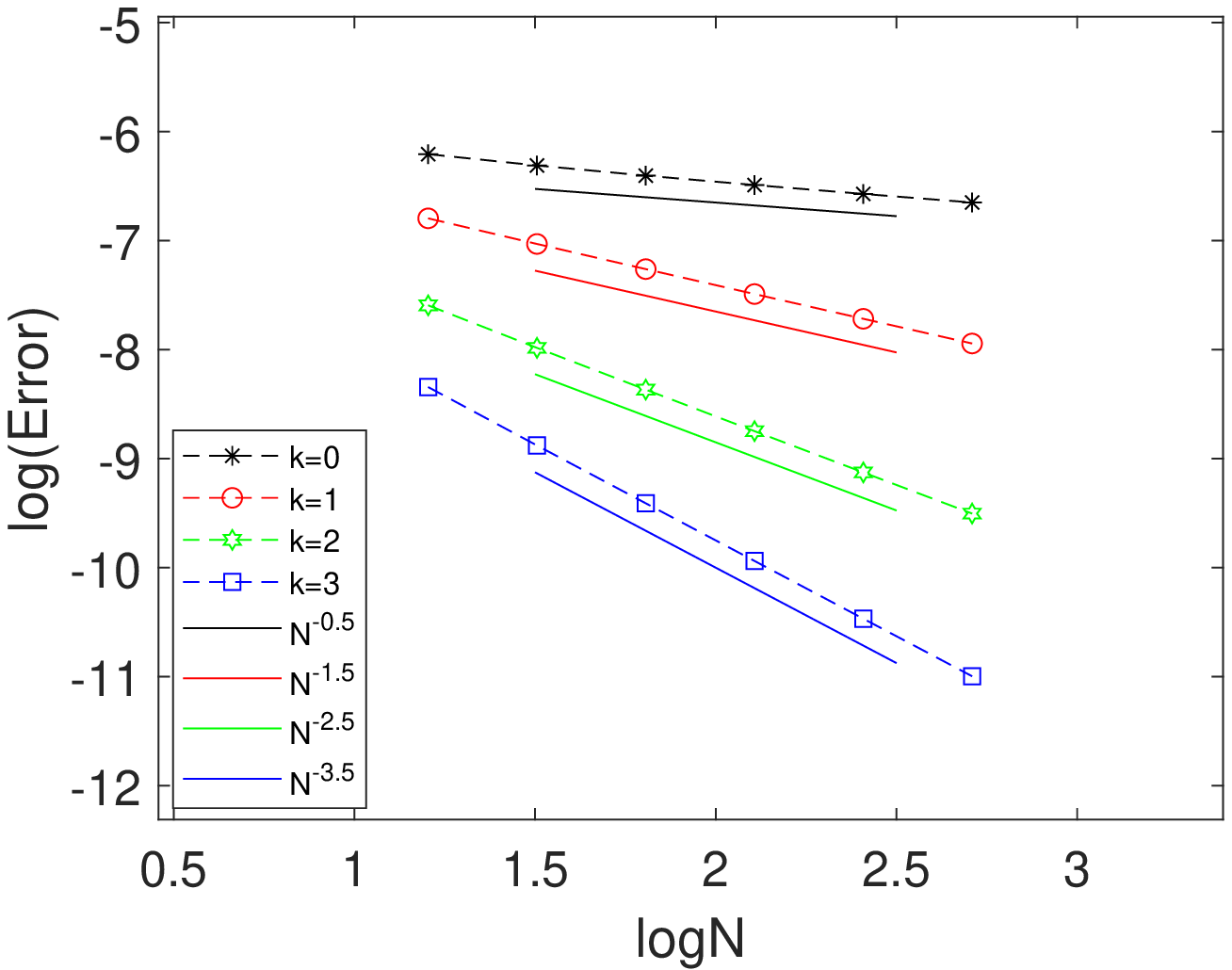}
	\quad
	\includegraphics[width=1.8in,height=1.8in]{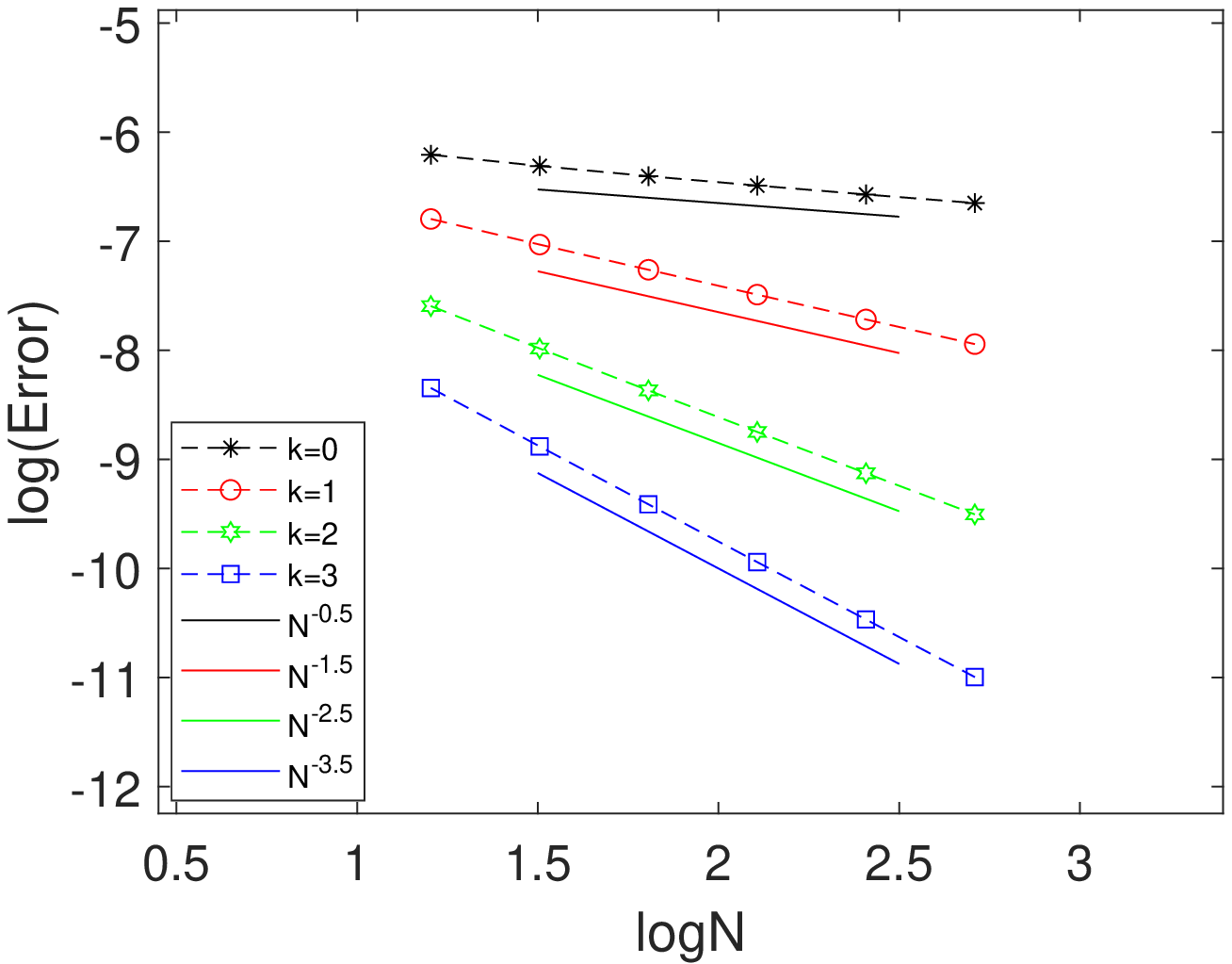}
	\centering
	\includegraphics[width=1.8in,height=1.8in]{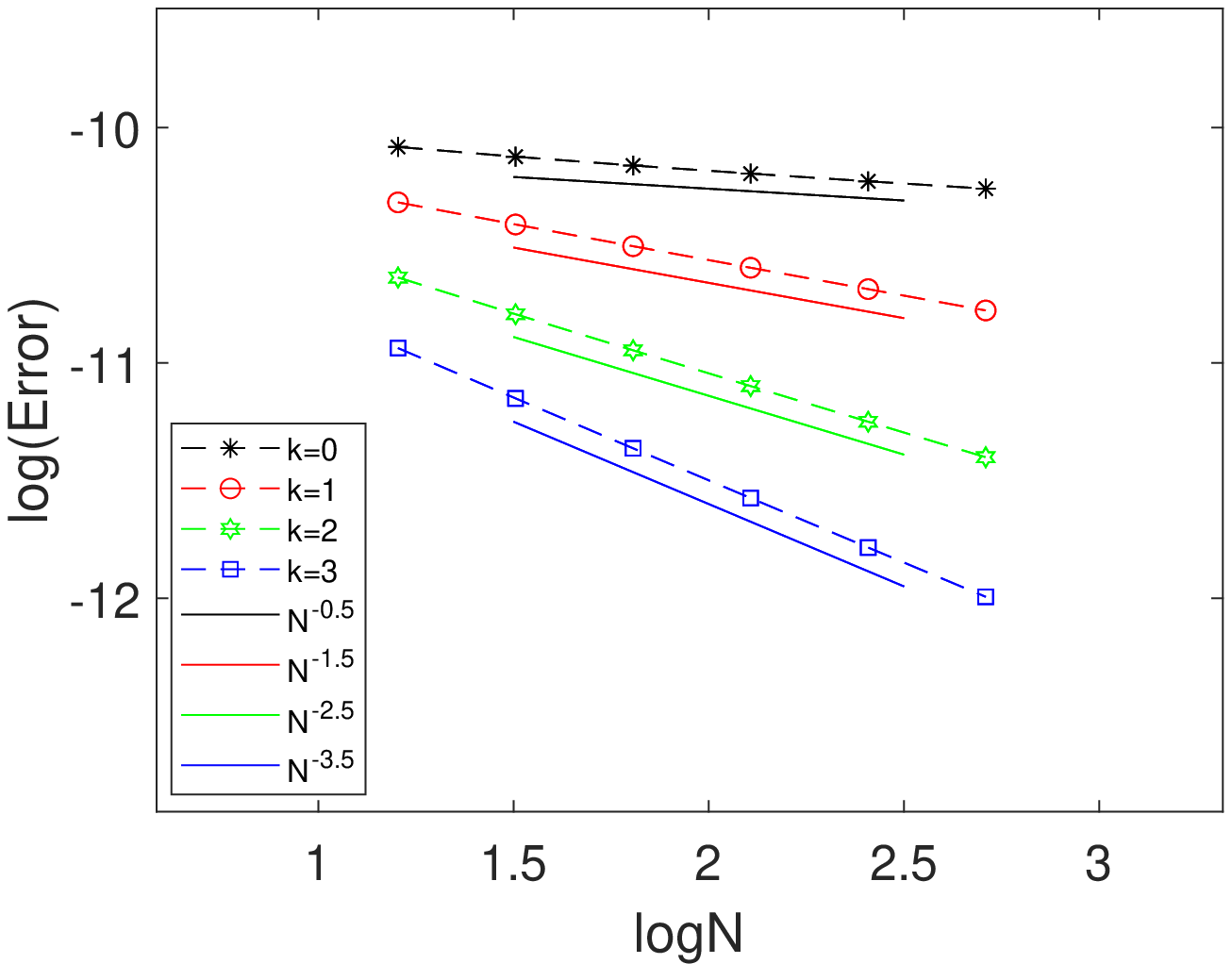}\quad
	\includegraphics[width=1.8in,height=1.8in]{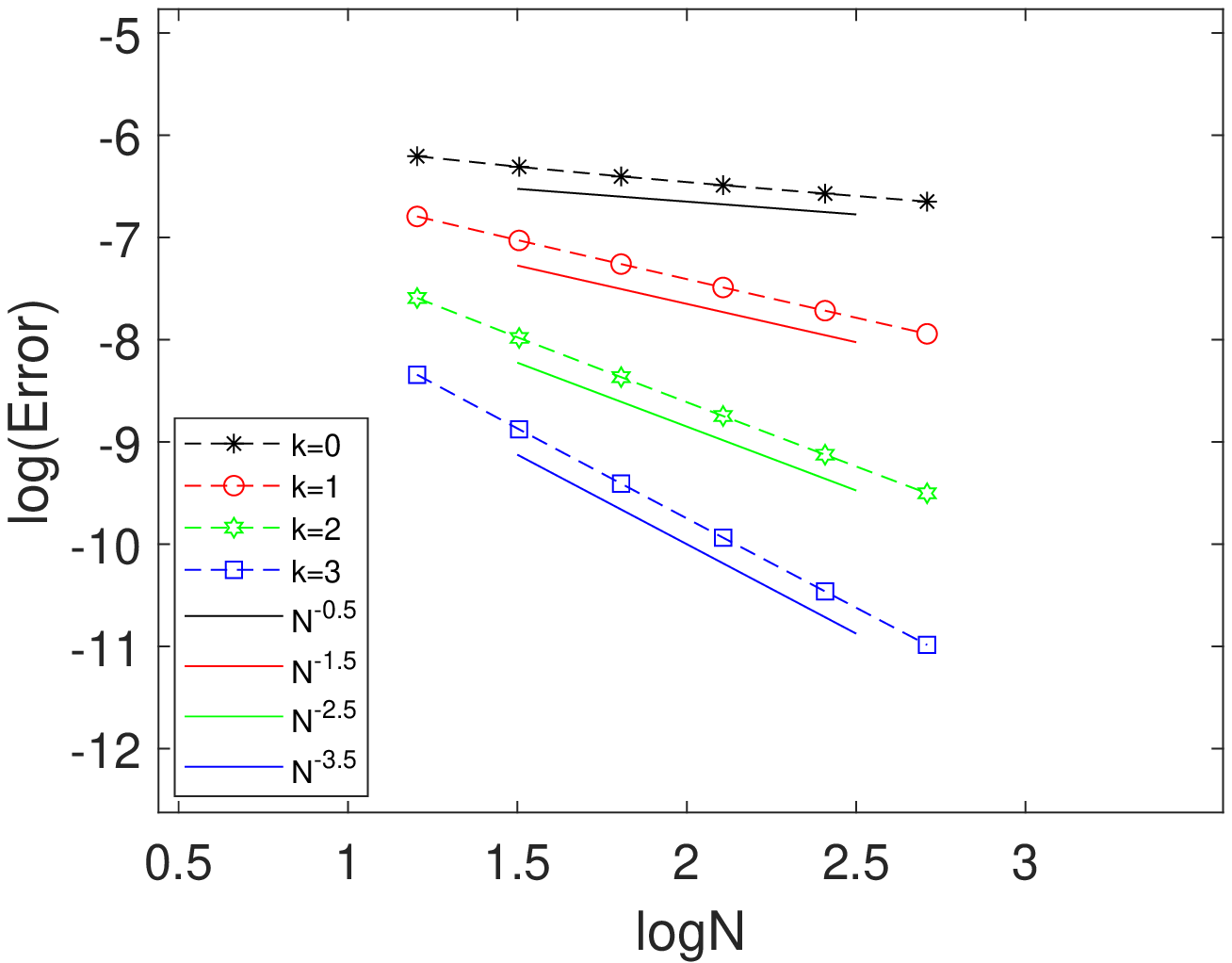}\quad
	\includegraphics[width=1.8in,height=1.8in]{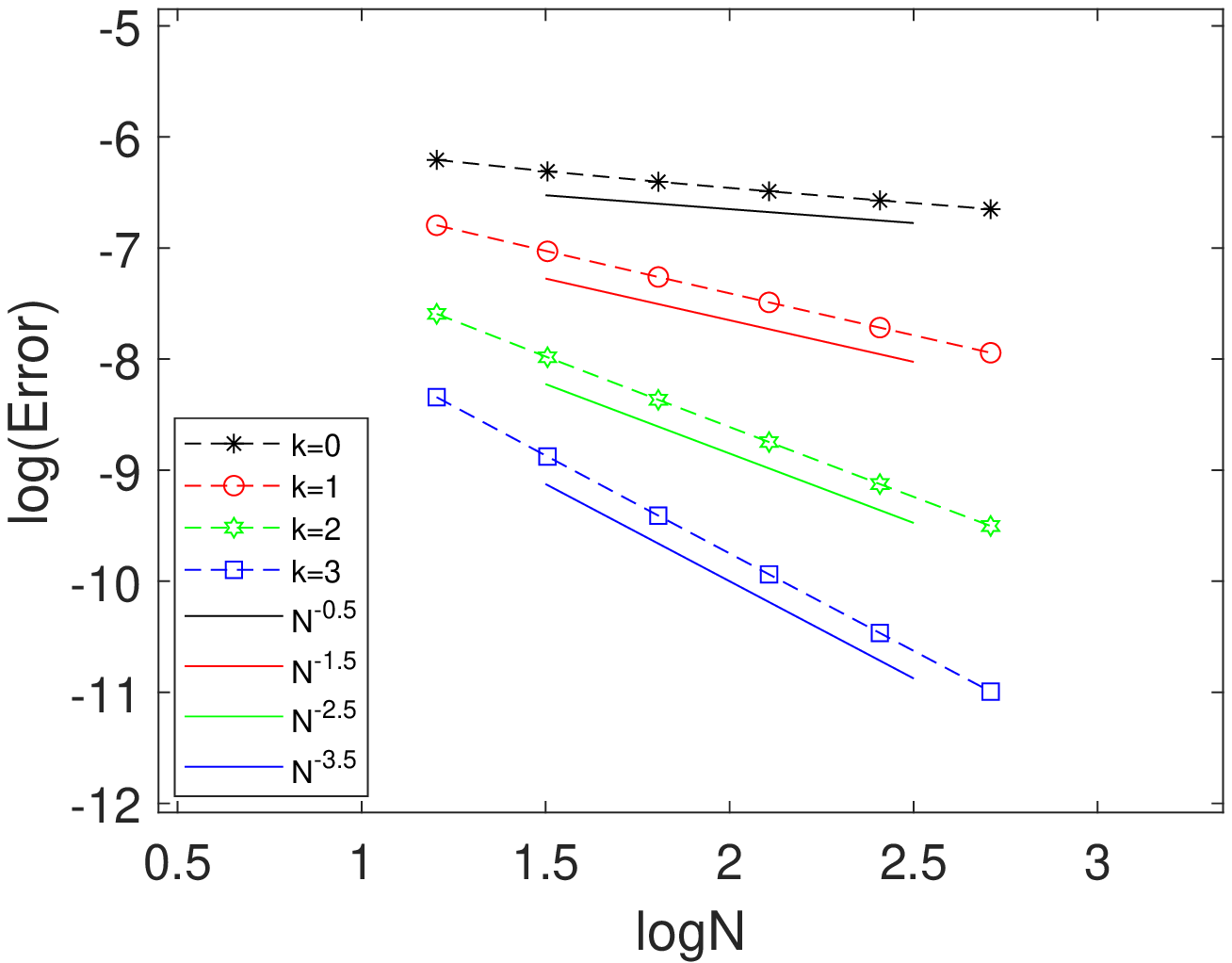}
	\caption{Energy-error:
		$\varepsilon=10^{-4}$ (top); $\varepsilon=10^{-8}$ (bottom);
		left: S mesh; middle: BS mesh; right: B mesh.
	}
	\label{fig:error:energy}
\end{figure}

\begin{figure}[htp]
	\centering
	\includegraphics[width=1.8in,height=1.8in]{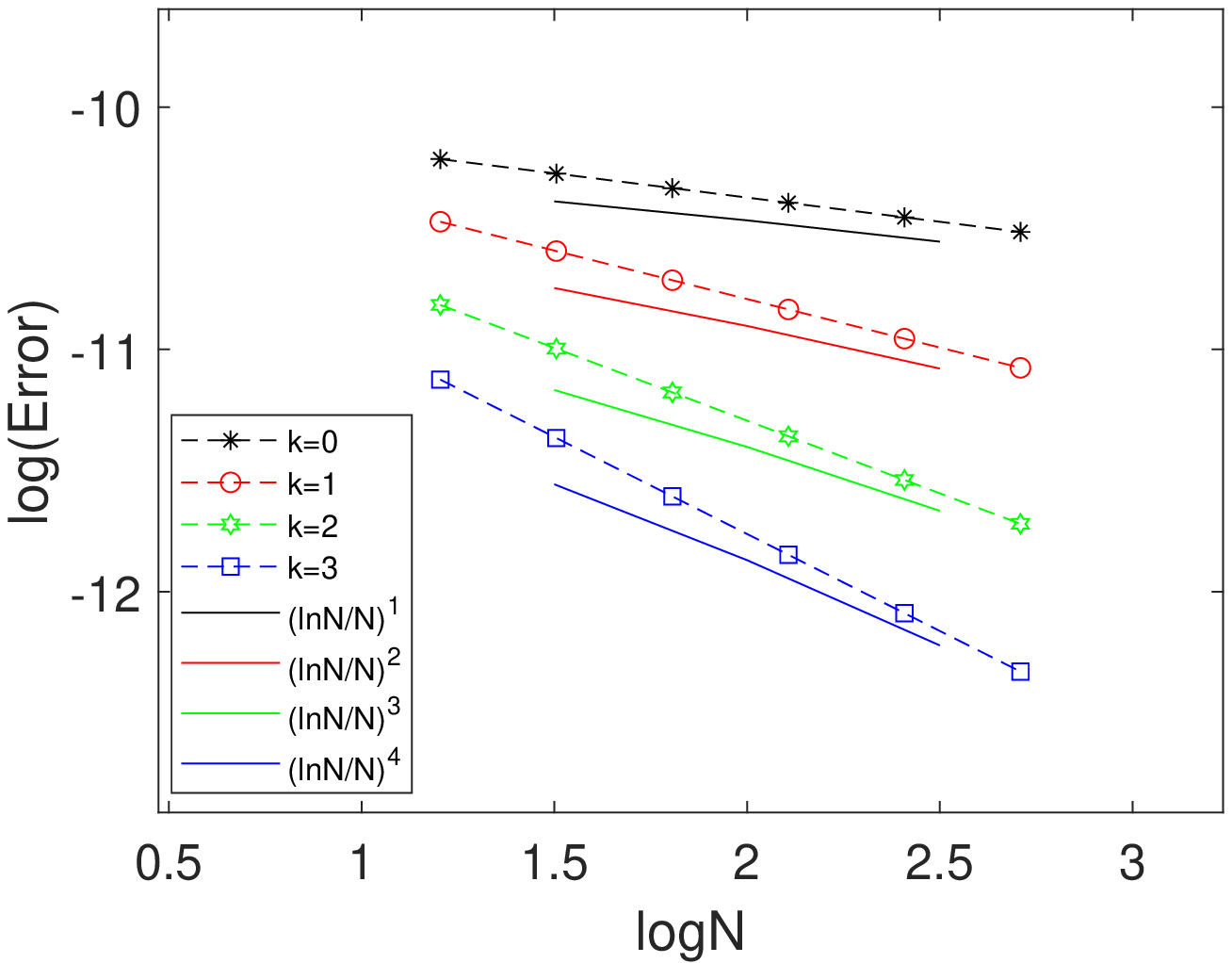}\quad
	\includegraphics[width=1.8in,height=1.8in]{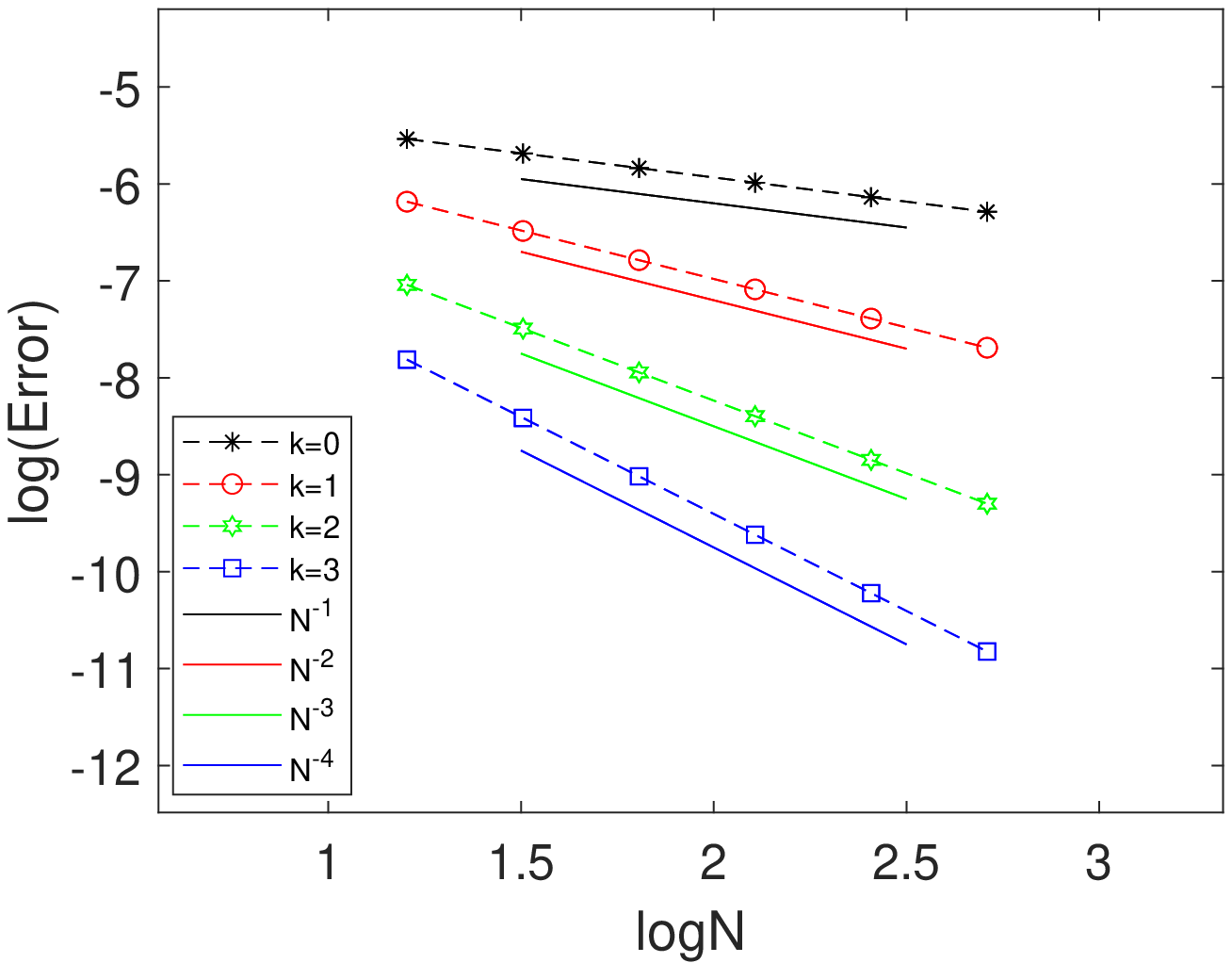}\quad
	\includegraphics[width=1.8in,height=1.8in]{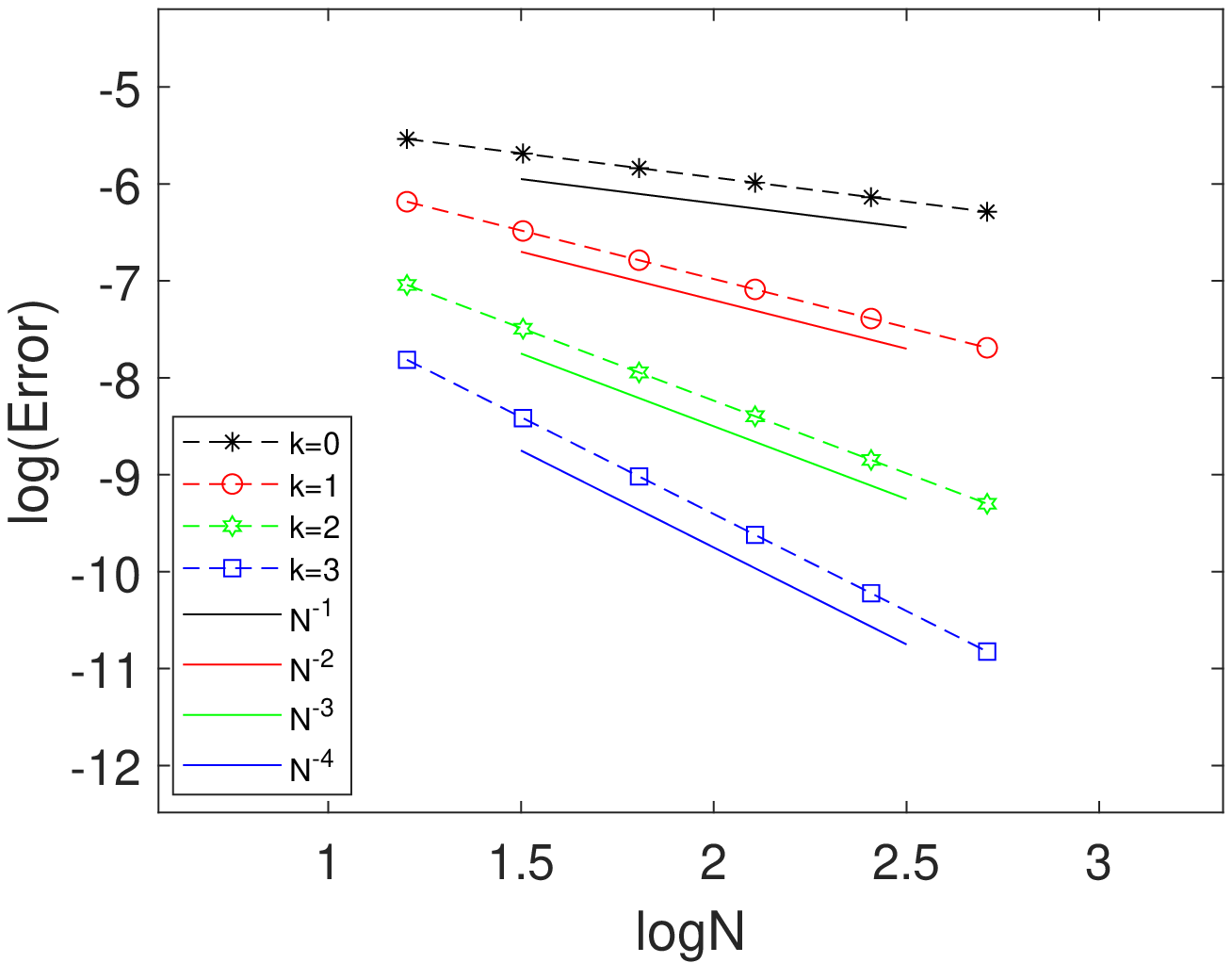}
	\centering
	\includegraphics[width=1.8in,height=1.8in]{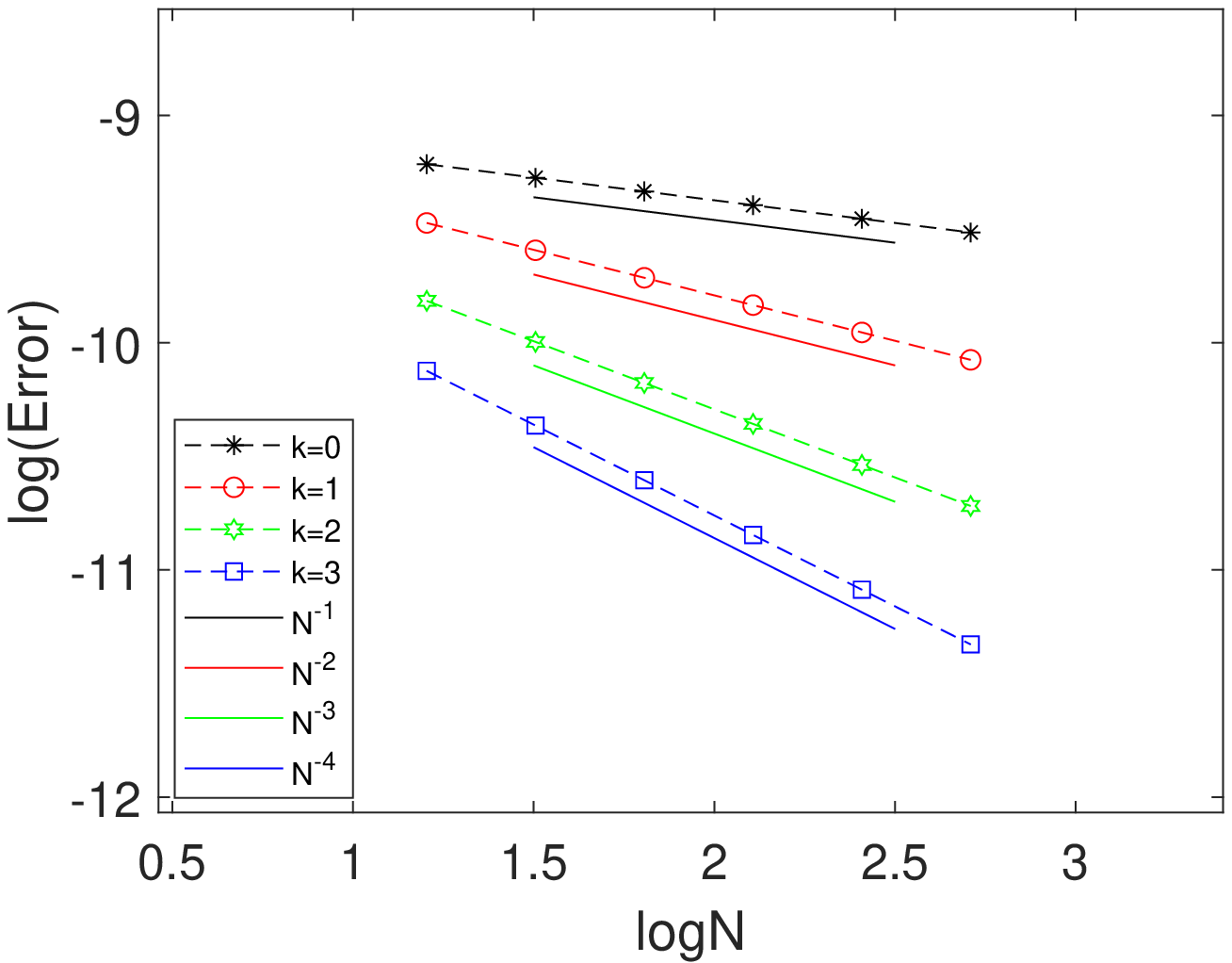}\quad
	\includegraphics[width=1.8in,height=1.8in]{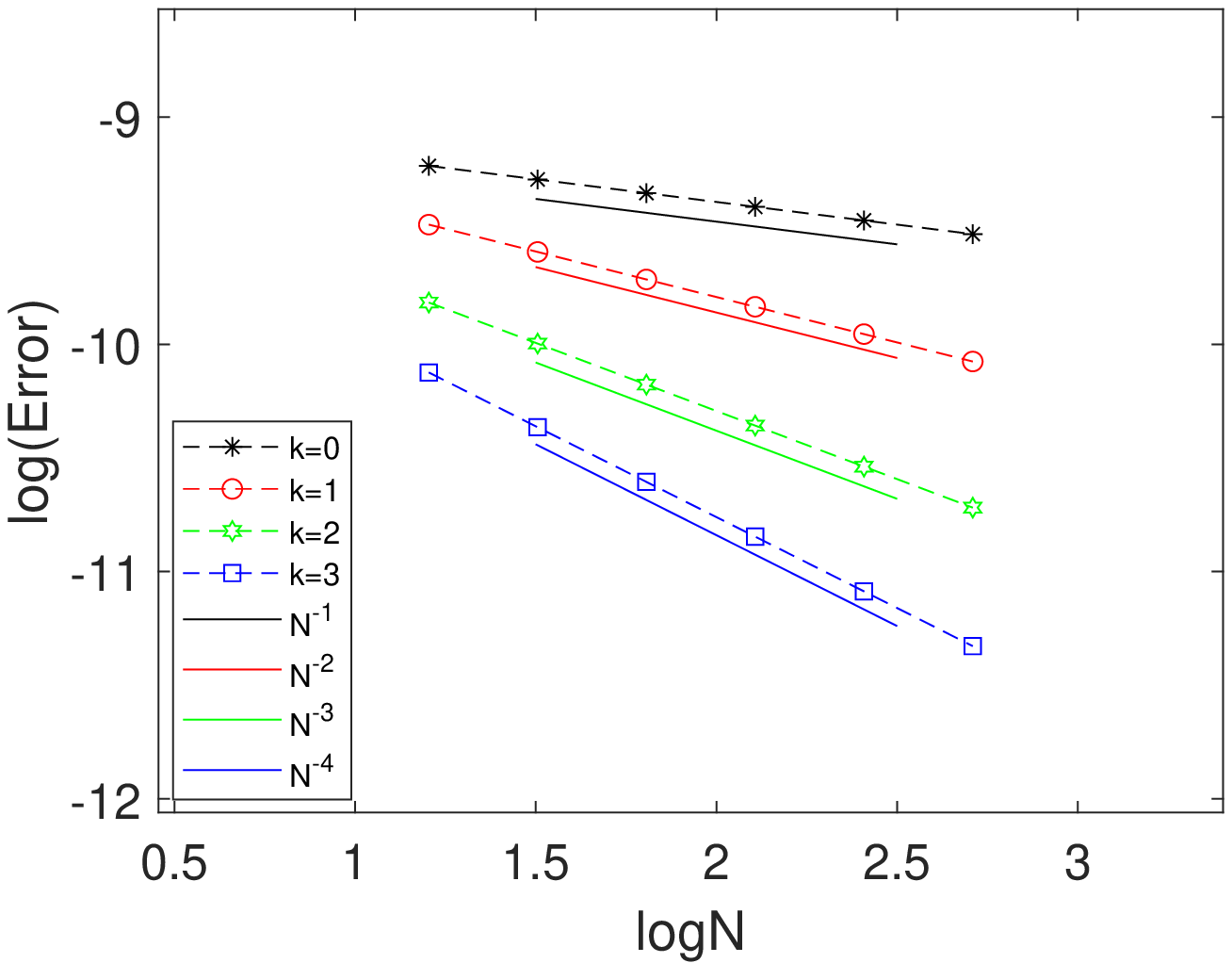}\quad
	\includegraphics[width=1.8in,height=1.8in]{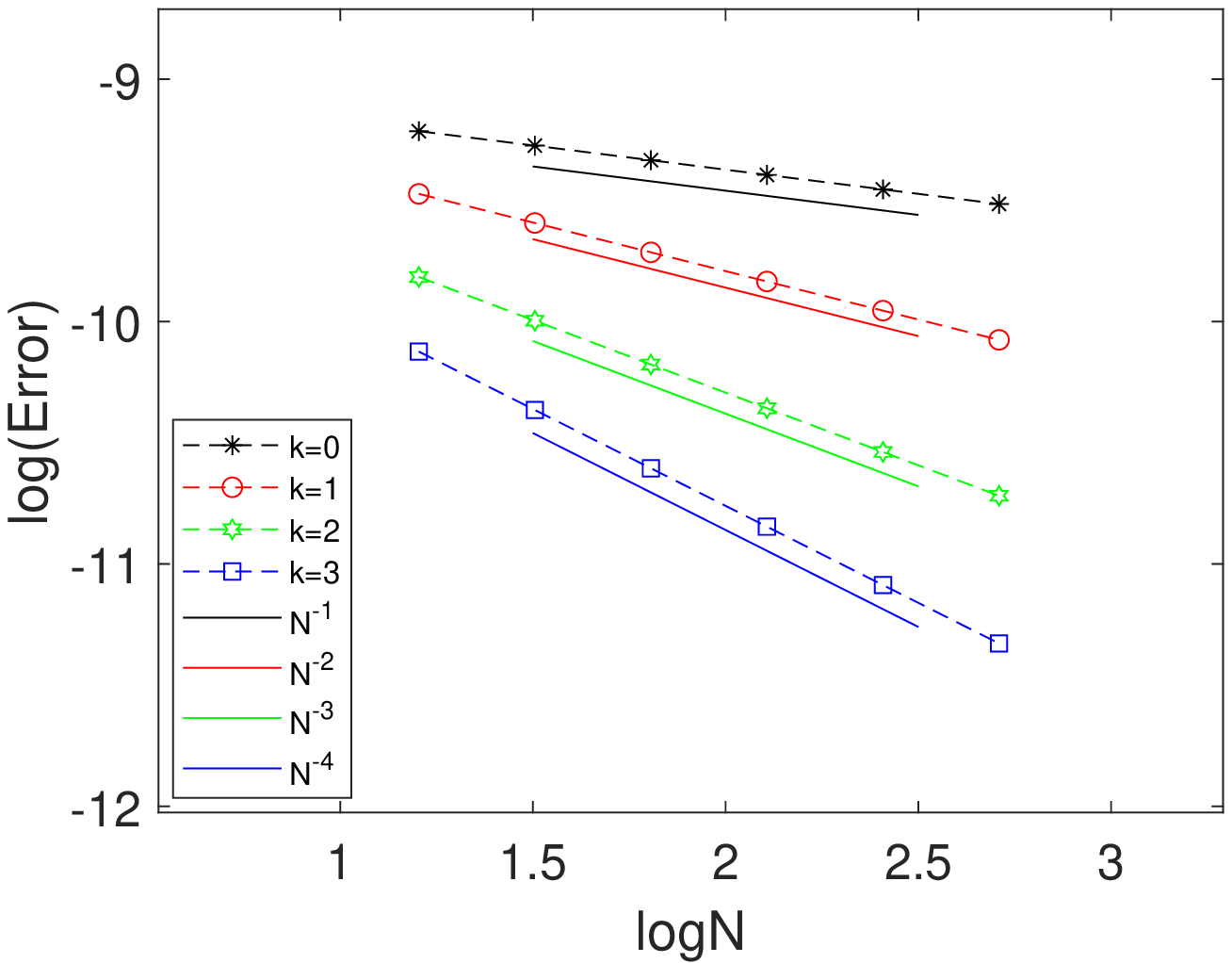}
	\caption{\small
		$L^2$-error $\norm{u-\uph}$:
		$\varepsilon=10^{-4}$ (top); $\varepsilon=10^{-8}$ (bottom);
		left: S mesh; middle: BS mesh; right: B mesh.
	}
	\label{fig:error:u}
\end{figure}

\begin{figure}[htp]
	\centering
	\includegraphics[width=1.8in,height=1.8in]{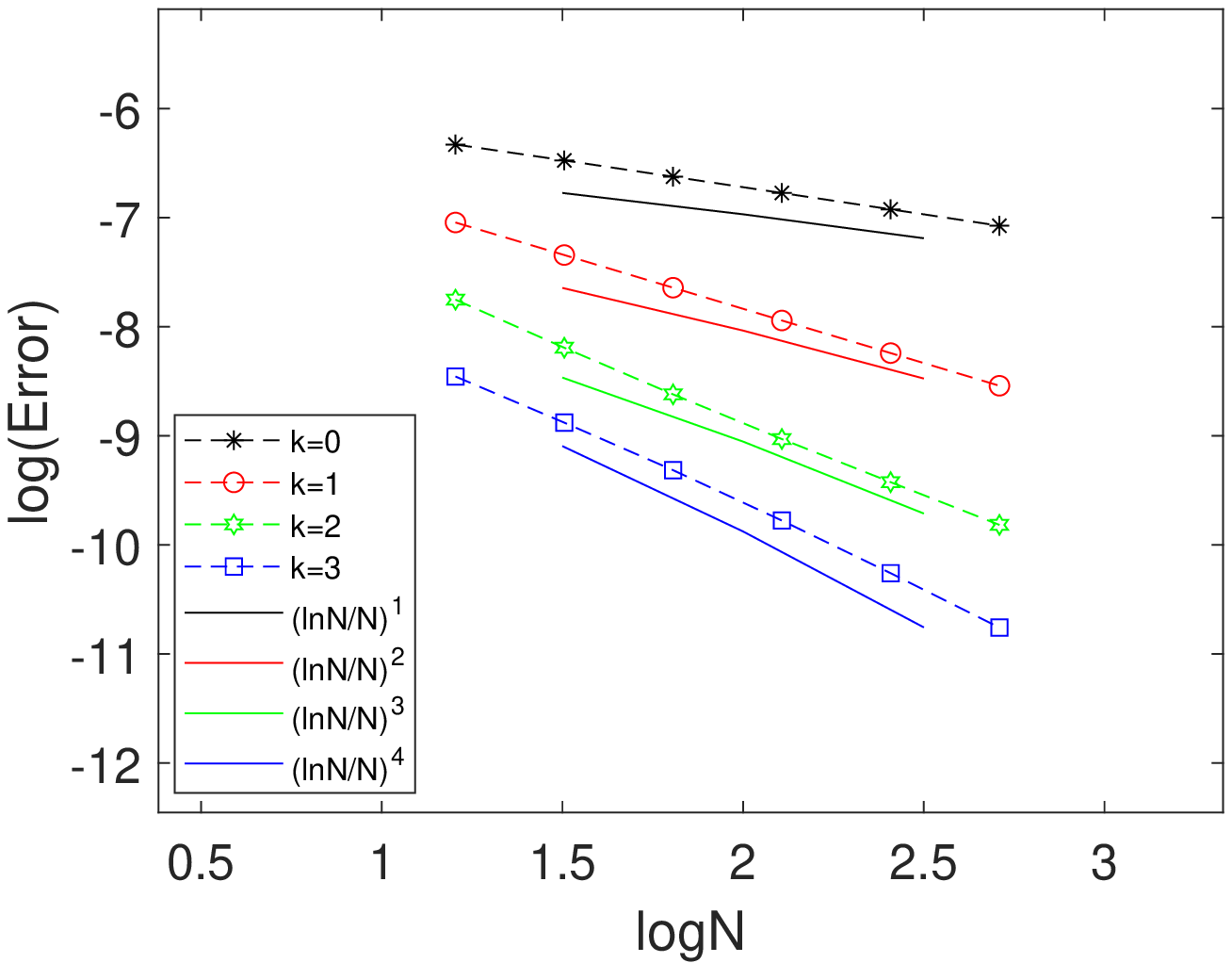}\quad
	\includegraphics[width=1.8in,height=1.8in]{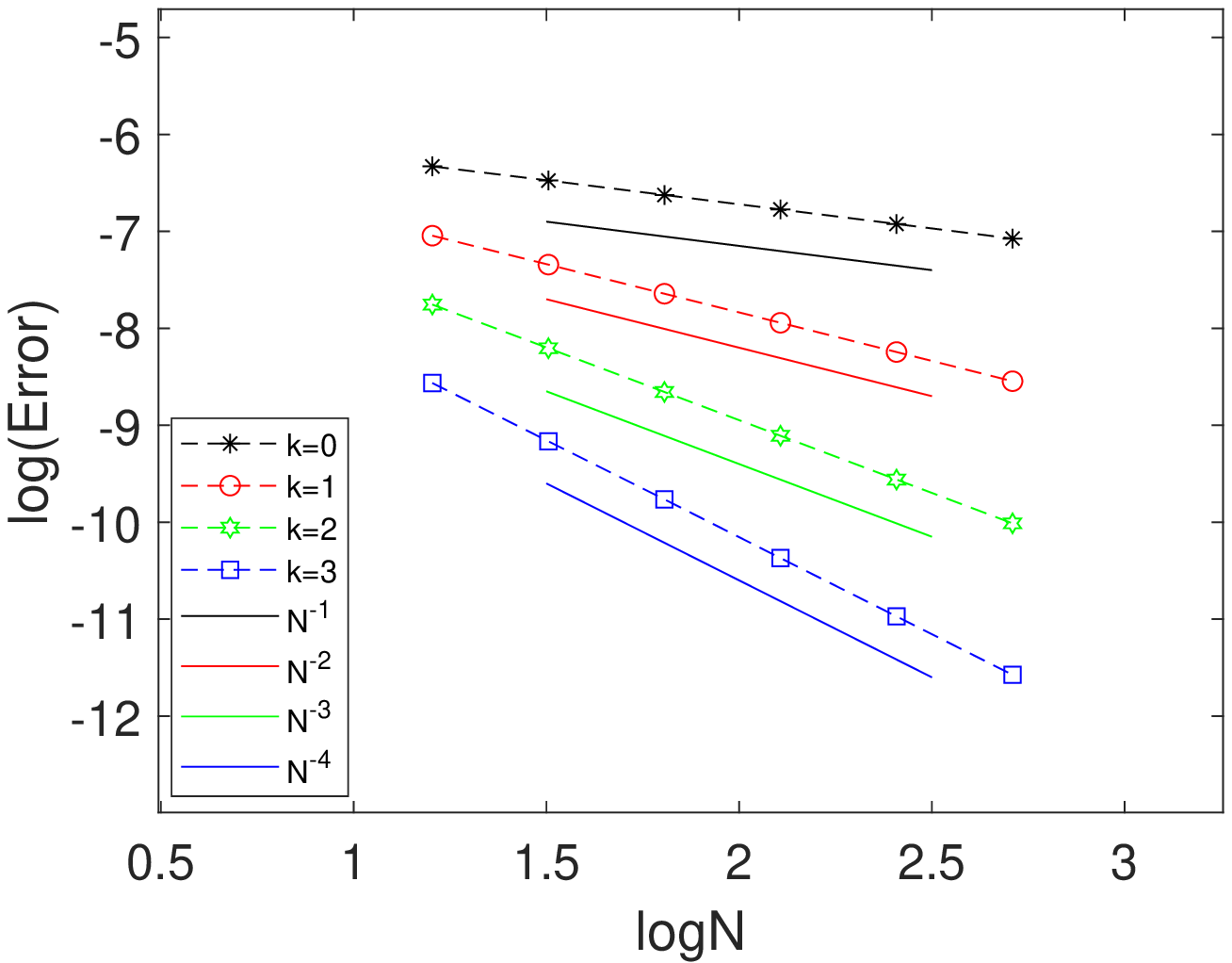}\quad
	\includegraphics[width=1.8in,height=1.8in]{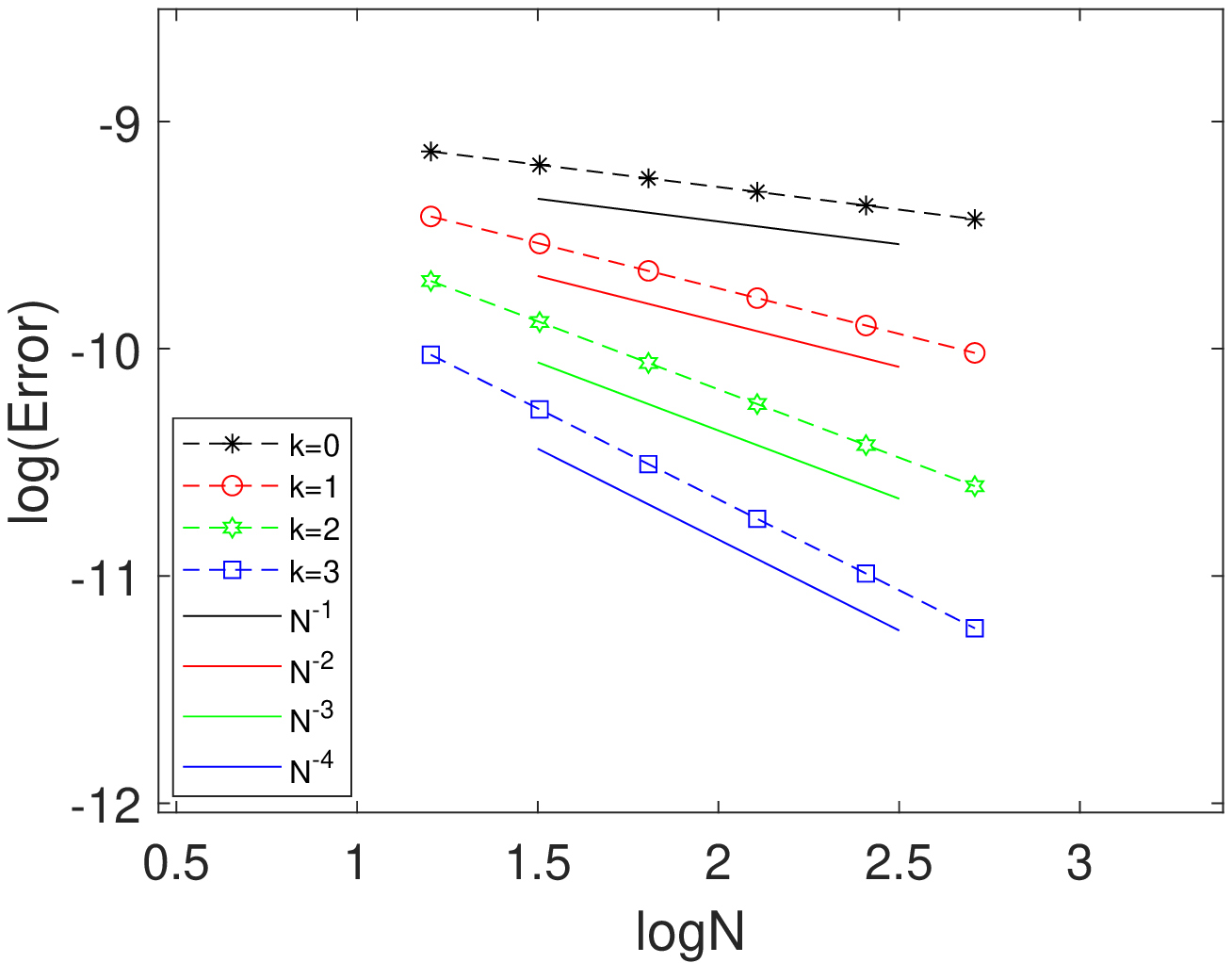}
	\centering
	\includegraphics[width=1.8in,height=1.8in]{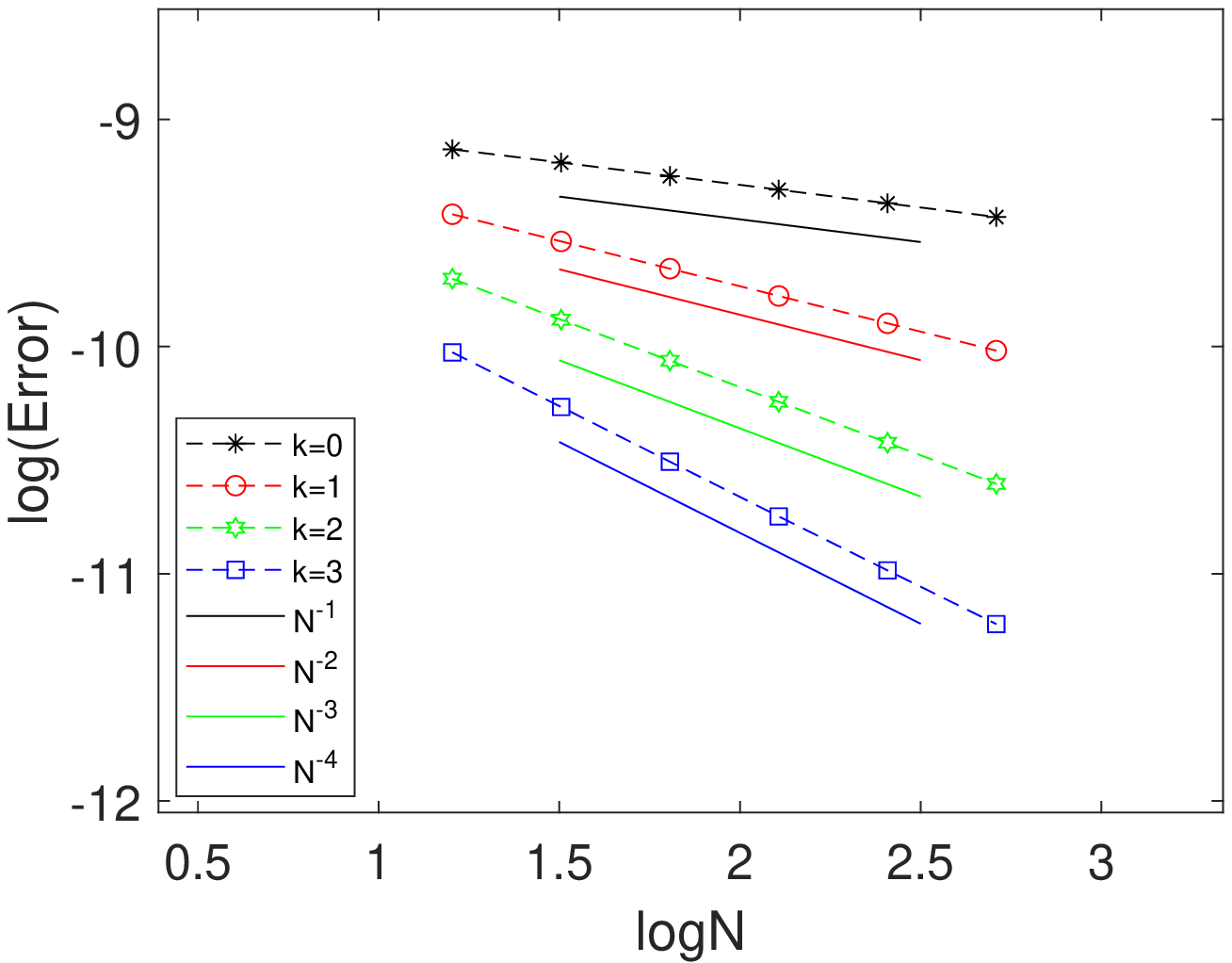}\quad
	\includegraphics[width=1.8in,height=1.8in]{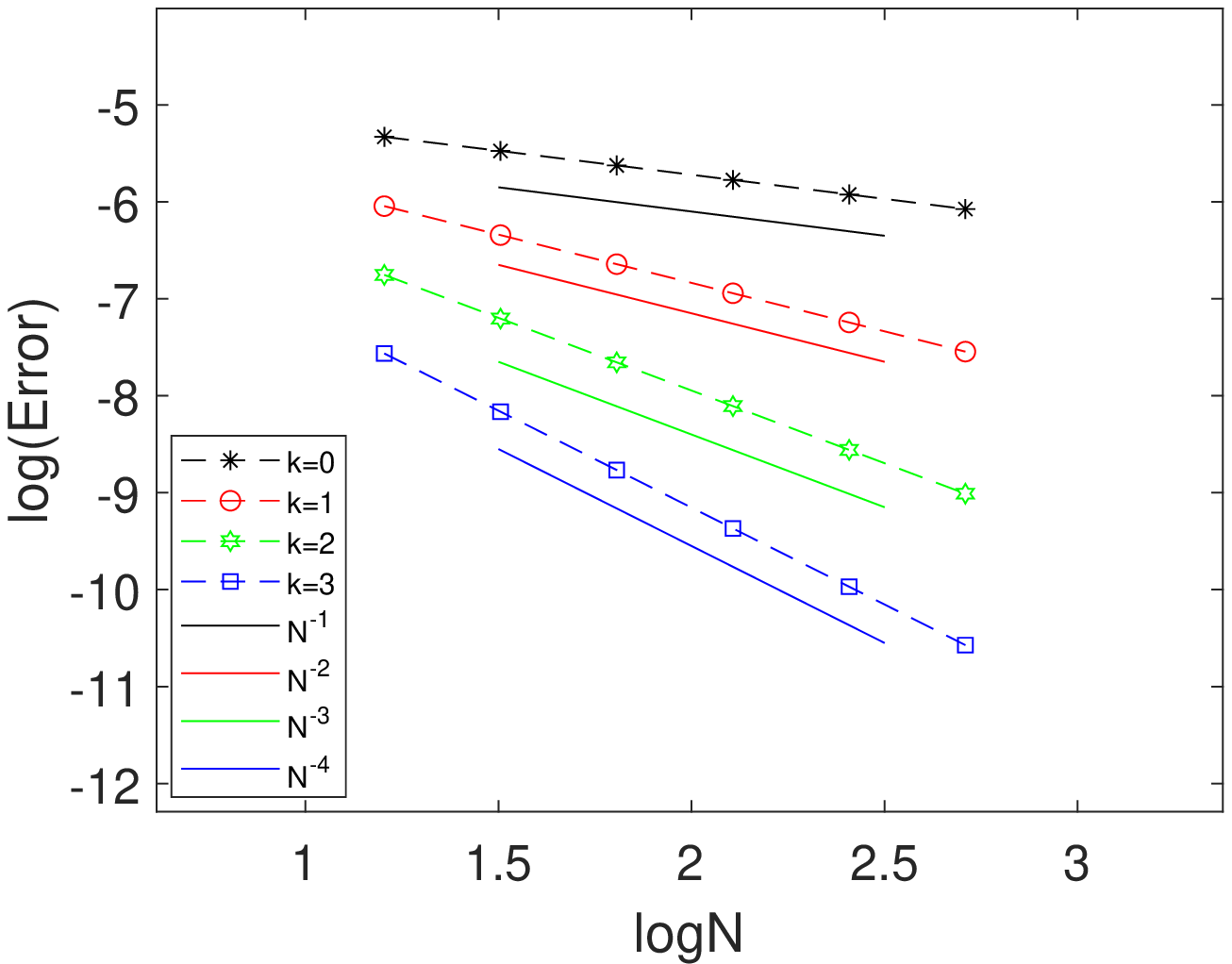}\quad
	\includegraphics[width=1.8in,height=1.8in]{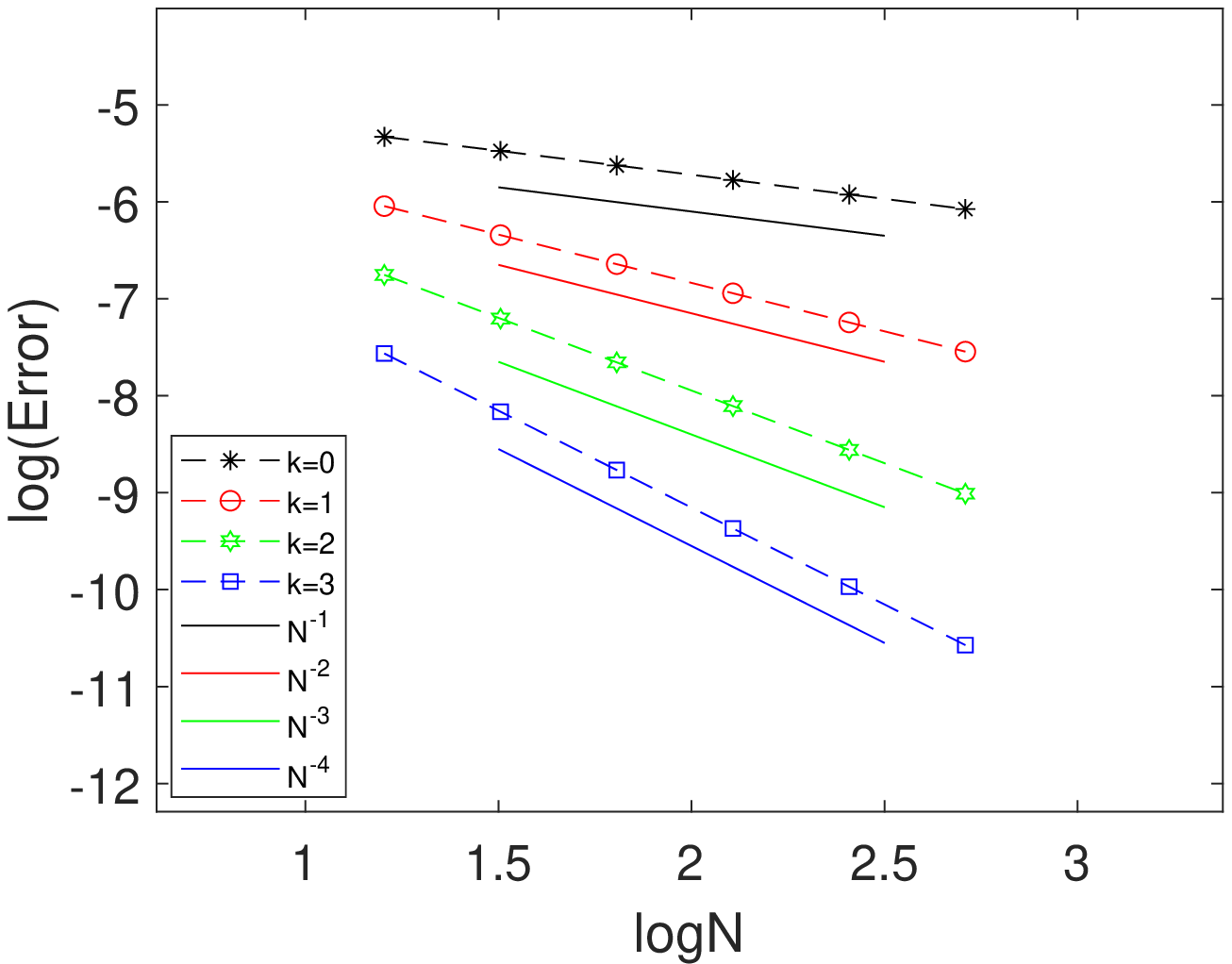}
	\caption{\small
		$L^2$-error $\norm{p-\pph}$:
		$\varepsilon=10^{-4}$ (top); $\varepsilon=10^{-8}$ (bottom);
		left: S mesh; middle: BS mesh; right: B mesh.
	}
	\label{fig:error:p}
\end{figure}

\section{Concluding remarks}
\label{Sec:concluding}

In this study, we considered the local discontinuous Galerkin method
on three typical layer-adapted meshes
for a third-order singularly perturbed problem of convection-diffusion type.
We obtained a quasi-optimal error estimate in the energy norm.
The convergence rate is uniformly valid with respect to a small perturbation parameter.
The theoretical findings provide insights into
the discontinuous Galerkin method 
for higher-order singularly perturbed problems.
Extension of such results to higher odd-order singularly perturbed problems
and higher dimensional singularly perturbed problems
constitute our future work.

\appendix
\section*{Appendix}
\label{appendix}
\setcounter{equation}{0}
\renewcommand{\theequation}{A.\arabic{equation}}


In this part, we provide a technical lemma
used in the proof of our main result.

\begin{lemma} Suppose $(X,Y) \in\mathcal {V}_N^2$ satisfies
	\begin{equation}\label{Variation:form:p:q}
	\dual{Y}{s}_{I_j}
	+\varepsilon(\dual{X}{s^{\prime}}_{I_j}
	-{\hat X}_j s_j^{-}+{\hat X}_{j-1}s_{j-1}^{+})=F_j(s)
	\end{equation}
	in each element $I_j\in \Omega_N$ and
	for any test function $s\in \mathcal{V}_N$,
	where $F_j(s): \mathcal{V}_N\to R$ is a linear functional, and
	\begin{equation*}
	\hat X_j =
	\begin{cases}
	X_j^{+} ,\hspace{2em}j=0,1, \ldots ,N-1,\\
	0,\hspace{3em}j=N.
	\end{cases}
	\end{equation*}
	Then, the local estimate holds
	\begin{equation}\label{relationship:p:q}
	\norm Y_{I_j}
	\le C\varepsilon\left(h_j^{-1}\norm{X}_{I_j}
	+ h_j^{-1/2}|\jump{X}_j|\right)
	+ \frac{|{F_j}(Y)|}{\norm{Y}_{I_j}}
	\end{equation}
	for each element $I_j\in \Omega_N$, 
	where $C>0$ is independent of $\varepsilon$ and $h_j$.
\end{lemma}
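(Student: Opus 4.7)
The plan is to exploit \eqref{Variation:form:p:q} with a carefully chosen local test function to produce an $L^2$-identity on the single element $I_j$. Concretely, I would test \eqref{Variation:form:p:q} against $s=Y|_{I_j}\in\mathcal{P}^k(I_j)$ (extended by zero outside $I_j$), which yields
\[
\norm{Y}_{I_j}^2 = F_j(Y) - \varepsilon\dual{X}{Y'}_{I_j} + \varepsilon\hat{X}_j\, Y_j^- - \varepsilon\hat{X}_{j-1}\, Y_{j-1}^+.
\]
Bounding the right-hand side in absolute value and then dividing through by $\norm{Y}_{I_j}$ will deliver \eqref{relationship:p:q}, so the task reduces to estimating each of the three $\varepsilon$-terms by $\norm{Y}_{I_j}$ times a suitable multiple of $\norm{X}_{I_j}$ and $|\jump{X}_j|$.

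For the volume term, I would apply the Cauchy--Schwarz inequality together with the standard inverse inequality $\norm{Y'}_{I_j}\le C h_j^{-1}\norm{Y}_{I_j}$ to obtain
\[
|\dual{X}{Y'}_{I_j}|\le C h_j^{-1}\norm{X}_{I_j}\norm{Y}_{I_j}.
\]
For the left-endpoint term at $x_{j-1}$, both $\hat{X}_{j-1}=X_{j-1}^+$ and $Y_{j-1}^+$ are one-sided limits taken from within $I_j$, so the trace inequality for polynomials yields $|\hat{X}_{j-1}|\le C h_j^{-1/2}\norm{X}_{I_j}$ and $|Y_{j-1}^+|\le C h_j^{-1/2}\norm{Y}_{I_j}$, hence $|\hat{X}_{j-1}Y_{j-1}^+|\le C h_j^{-1}\norm{X}_{I_j}\norm{Y}_{I_j}$.

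The main obstacle is the right-endpoint term $\hat{X}_j Y_j^-$. For $j<N$ the flux prescribes $\hat{X}_j=X_j^+$, which is a trace taken from the \emph{neighboring} element $I_{j+1}$ and therefore cannot be controlled by $\norm{X}_{I_j}$ alone. The key maneuver is the decomposition
\[
X_j^+ = X_j^- + \jump{X}_j,
\]
which splits the flux into a piece $X_j^-$ that is a trace from within $I_j$ (and hence bounded by $C h_j^{-1/2}\norm{X}_{I_j}$) plus the jump $\jump{X}_j$, which is precisely the quantity permitted on the right-hand side of \eqref{relationship:p:q}. Combined with $|Y_j^-|\le C h_j^{-1/2}\norm{Y}_{I_j}$, this produces
\[
|\hat{X}_j Y_j^-|\le \bigl(Ch_j^{-1}\norm{X}_{I_j}+Ch_j^{-1/2}|\jump{X}_j|\bigr)\norm{Y}_{I_j}.
\]
For $j=N$ the flux satisfies $\hat{X}_N=0$, so this boundary contribution disappears automatically.

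Assembling the three bounds in the identity gives
\[
\norm{Y}_{I_j}^2 \le |F_j(Y)| + C\varepsilon\bigl(h_j^{-1}\norm{X}_{I_j} + h_j^{-1/2}|\jump{X}_j|\bigr)\norm{Y}_{I_j},
\]
and dividing by $\norm{Y}_{I_j}$ (the case $\norm{Y}_{I_j}=0$ being trivial) yields \eqref{relationship:p:q}. The only subtle point in this proof is the flux decomposition just above; everything else is Cauchy--Schwarz and inverse/trace estimates on polynomial spaces.
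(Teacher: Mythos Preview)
Your proof is correct and follows essentially the same approach as the paper: test with $s=Y$ on $I_j$, then control the resulting terms by Cauchy--Schwarz and inverse/trace inequalities. The one minor difference is that the paper first integrates $\dual{X}{Y'}_{I_j}$ by parts, which makes the left-endpoint contribution $X_{j-1}^{+}Y_{j-1}^{+}$ cancel exactly and leaves only $\dual{X'}{Y}_{I_j}+Y_j^{-}\jump{X}_j$; you instead bound the left-endpoint term directly and extract the jump via the decomposition $X_j^{+}=X_j^{-}+\jump{X}_j$, arriving at the same estimate with one extra (harmless) term.
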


\textbf{Proof.} Take $s = Y$ into \eqref{Variation:form:p:q},
use integration by parts, an inverse inequality 
and the Cauchy-Schwarz inequality to get
\begin{align*}\label{simplify equation YY}
\norm{Y}_{I_j}^2
&=\varepsilon(-\dual{X}{Y^{\prime}}_{I_j}
+X_j^{+} Y_j^{-}- X_{j-1}^{+} Y_{j-1}^{+})+F_{j}(Y)
\nonumber\\
& =\varepsilon(\dual{X^{\prime}}{Y}_{I_j}+Y_j^{-}\jump{X}_j)+F_{j}(Y)
\nonumber\\
&\le C\varepsilon(h_j^{-1}\norm{X}_{I_j}\norm{Y}_{I_j}
+h_{j}^{-1/2}|\jump{X}_j|\norm{Y}_{I_j})+|F_{j}(Y)|
\end{align*}
for $j=1,2,\dots,N-1$. Hence,
\begin{align*}
\norm{ Y}_{I_j}  
&\le  C\varepsilon (h_j^{-1}\norm {X}_{I_j}
+h_{j}^{-1/2}|\jump{X}_j|)+ \frac{|F_{j}(Y)|}{\norm{Y}_{I_j}}
\end{align*}
hold for $j=1,2,\dots,N-1$.
Analogously, one can obtain the conclusion for $j=N$
by noticing that $\jump{X}_N=-X_N^{-}$.

\bibliography{YanZhouCheng}
\bibliographystyle{amsplain}

\end{document}